\newtheorem{theorem}{Theorem}
\newtheorem{definition}{Definition}
\newtheorem{proposition}{Proposition}
\newtheorem{lemma}[proposition]{Lemma}
\newtheorem{corollary}[proposition]{Corollary}
\newtheorem{remark}{Remark}
\begin{document}
\null
\vspace{-2cm}

 \vspace{2cm}
 \title{Geometric classification of real ternary  octahedral quartics }

\author {N. C. Combe }

 \address{Aix-Marseille University, IML, Campus de Luminy case 907 13288 Marseille Cedex-9 France}
 
 \email{noemie.combe@univ-amu.fr}
 
 \thanks{This work has been carried out in the framework of the Labex Archim\`{e}de (ANR-11-LABX-0033) and of the A*MIDEX project (ANR-11-IDEX-0001-02), funded by the ``Investissements d'Avenir'' French Government programme managed by the French National Research Agency (ANR)} 

\keywords  {Ternary Quartic Surfaces, Octahedral Group, Symmetry, Singularity}

\subjclass{11Cxx, 11Hxx,14Jxx, 14 Qxx}

\begin{abstract}
Ternary real-valued quartics in $\mathbb{R}^{3}$ being invariant under octahedral symmetry are considered. The geometric classification of these surfaces is given. A new type of surfaces emerge from this classification.
\end{abstract}

\maketitle

\section{Introduction}
 

 \vspace{3pt}
In this paper we are interested in  classifying real quartic surfaces, in the three-dimensional affine space, admitting an octahedral symmetry. More precisely, given a real ternary polynomial of degree four invariant under the octahedral group, we discuss the following problem: what geometric and topologic characterization can one give on the 0-locus of these quartic forms?

In particular, the consideration of this problem leads to an investigation of the number of connected components of quartic octahedral surfaces. 
Moreover, from this entire classification emerge surfaces that were unlisted until now.
\vspace{3pt}

To classify such surfaces we use the same type of group theory approach that was  initiated by  E. Goursat ~\cite{Gou1887} and extended to the case of icosahedral symmetry by W. Barth~\cite{Ba}. In our proof we will use the quadric's classification given by T. J. I. Bromwich~\cite{Bro1905} and R. S. Buringthon~\cite{Bur1932} and matrix theory.
\vspace{3pt}

\vspace{3pt}
Let us introduce some necessary definitions.

\begin{definition}
An octahedral invariant real quartic surface  is the 0-locus of a real polynomial $f(x,y,z)$ of   degree 4 in $\mathbb{R}^3$, invariant by the octahedral group $O_{h}$. In other words
\[\forall g \in O_{h}\subset Gl(3,\mathbb{R}),\  f(g^{-1}{\bf v})=f({\bf v}),\ {\bf v}=(x,y,z).\]
\end{definition}
\vspace{3pt}

The group $O_{h}$ is the common symmetry group of  two Platonic solids : octahedron and cube, both being dual to each other. This is a group of order 48 which admits 9 symmetry planes: 3 parallel to the faces of the cube and 6 planes containing two opposite edges. Moreover, there are 3  axes of order 4 through the opposite vertices of the octahedron,  4 axes of order 3 through the opposite vertices of the cube and 6  axes of order 2 through the center of the edges of the solids.  
\vspace{3pt}

Let $Ox,Oy,Oz$ be three orthogonal coordinate axes in $\mathbb{R}^{3}$. Choose 
the regular octahedron $|x|+|y|+|z|=1$ with vertices $(\pm1,0,0), (0,\pm1,0) ,(0,0,\pm1)$ and the cube with vertices $\{(a,b,c)| a,b,c=\pm 1\}$. The group $O_{h}$   is generated by two rotations  $g_1=\left(\begin{smallmatrix}1&0&0\\
0&0&-1\\
0&1&0\\\end{smallmatrix}\right)$, $g_2=\left(\begin{smallmatrix} 0&0&-1\\
0&1&0\\
1&0&0\\\end{smallmatrix}\right)$ around the x- and y-axis by $\frac{\pi}{2}$ and the reflection $g_{3}=\left(\begin{smallmatrix}1&0&0\\
0&1&0\\
0&0&-1\\ \end{smallmatrix}\right)$ against the ($z=0$)-plane.

The ternary quartic form with real coefficients has, under octahedral group $O_{h}$, independent invariants of degree two, four and six. 
 These linearly independent homogeneous polynomials of lowest degree are:\begin{equation}\label{E:invocta}
\begin{aligned}
u&=x^2+y^2+z^2 ,\\
v&=x^2y^2+y^2z^2+z^{2}x^{2},\\
w&=x^{2}y^{2}z^{2}. \\
\end{aligned}
\end{equation}
These forms generate the $O_{h}$ invariant polynomials, and by linear combination of these invariants one can write a polynomial $F(u,v,w)$ in variables $u,v,w$. As we are interested in the ternary real polynomial of degree 4,
we denote by $Q_{(A,B,C)}(D)$ an $O_{h}$-invariant quartic surface which will be represented in affine coordinates as the sum of quartic and quadratic forms,  by the equation: 
\begin{equation}\label{E:invquartic}
f(x,y,z)= A(x^2y^2+y^2z^2+x^2z^2)+B(x^2+y^2+z^2)^{2}+C(x^2+y^2+z^2)+D=0.
\end{equation}
Since we are looking at real quartic surfaces, the coefficients $A$ and $B$ cannot vanish altogether. As the 0-locus of the polynomials $f$ and $af, \ a \in \mathbb{R}$ are the same, the class of equivalent polynomials under change of coordinates  can be reduced to the study of the one-parameter family of equations determined by the following coefficients:
\[
A=0,\ \ \ B=1,\ \ C=\begin{cases} -1&\\\phantom{-}0&\\\phantom{-}1& \end{cases};\qquad
A=1, \ B=\begin{cases}  -b&\\ \phantom{-}0,&b>0\\ \phantom{-}b&\\
\end{cases}, \ C=\begin{cases} -1& \\\phantom{-}0\\\phantom{-}1& \end{cases}.\\
\]

\vspace{5pt}
\noindent{\bf Remark.}
We will consider a lattice in $\mathbb{R}^{3}$ generated by three linearly independent vectors: \[\bf{a_{1}}=(1,1,1), \bf{a_{2}}=(1,1,0), \bf{a_{3}}=(1,0,0).\] 
The idea of fundamental region, described by Gauss in 1831, sprung from the theory of Lattices. 
Hence one can note that the lattice in $\mathbb{R}^{3}$ gives rise to a tessellation of the space, generated by a fundamental polyhedron of the crystallographic group $O_{h}$. In our case the fundamental polyhedron is a polytope with 5 vertices, 6 edges and 6 faces. 
\vspace{3pt}

The symmetric properties of the quartic imply that it is sufficient to consider the surface in a fundamental polyhedron. However since we are interested in the geometry of the quartic, we can restrain the fundamental polyhedron to a half-fundamental polyhedron domain that is to a tetrahedron.
Thus we will consider the quartic real octahedral surfaces in the region: $ \{x\geq 0, 0\leq y\leq x, 0\leq z \leq  x \}$ .
\vspace{3pt}

\noindent  {\bf Acknowledgments:} I would like to thank Professor Daniel Coray for initiating me in this subject and for interesting discussion during my stay in Geneva.

\section{Classification of quartic surfaces admitting octahedral symmetry}
This section is devoted to giving classification theorems of the octahedral quartics.
We shall introduce some terminology before we start. 
\begin{definition}
Let $M_{1}, M_{2}$ be two manifolds, int $B_{i}$ the interior of a ball in $M_{i}$ and $h: \partial B_{1} \rightarrow \partial B_{2}$ a homeomorphism. A connected sum of two manifolds $M_{1}\#M_{2}$ is obtained by removing the interior of the balls  in each of the manifolds and by gluing the boundaries of the discs by the above homeomorphism$h$. $M_{1}\#M_{2}=(M_{1}-int(B_{1}) )\cup_{h}(M_{2}-int(B_{2}) )$ 
\end{definition}
This definition will be used to describe some particular quartics.

\vspace{3pt}
In order to have a complete classification of these quartics, we will consider systematically each of the different families of parameters. The aim is to have a list of all the different cases and results corresponding to them. In the first two theorems we will present  the elementary cases.

\vspace{3pt}

\subsection{Real quartic surface with A=0}
In the case where $A=0$ the quartic surface is the 0-locus of the polynomial
\begin{equation}\begin{aligned}f(x,y,z)&= (x^{2}+y^{2}+z^{2})^{2}+C(x^{2}+y^{2}+z^{2}) +D,\\
&=\left(x^{2}+y^{2}+z^{2}+\frac{C}{2}(1-k)\right)\left(x^{2}+y^{2}+z^{2}+\frac{C}{2}(1+k)\right),\ k=\sqrt{1-\frac{4D}{C^{2}} }\end{aligned}
.\end{equation}
Under change of coordinates we can limit ourself to the three one-parameter families labelled by $C=-1,0,1$ 


\begin{theorem}\label{T:01CD} Real quartic surfaces with octahedral symmetry  $Q_{(0,1,C)}(D)$ are,
\begin{enumerate}
\item  $A=0,B=1,C=-1.$ 
\begin{itemize}
\item $D<0$, $Q_{(0,1,-1)}(D) = S^{2}\left(0,\sqrt{\frac{1}{2}(1+k)}\right)$,
\item $D=0$, $Q_{(0,1,-1)}(0)=S^{2}(0,1)\sqcup \{0\}$, where $\{0\}$ is a singular point of the surface,

\item If $0<D<\frac{1}{4}$,   $Q_{(0,1,-1)}(D)=S^{2}\left(0,\sqrt{\frac{1}{2}(1-k)}\right)\sqcup S^{2}\left(0,\sqrt{\frac{1}{2}(1+k)}\right)$. Both concentric spheres are such that their normal vector field points outwards for the external sphere and points inwards for the internal one,
\item  if $D=\frac{1}{4} $,  $Q_{(0,1,-1)}(\frac{1}{4}) =[S^{2}\left(0,\frac{1}{\sqrt{2}}\right)]^{2}$ is a singular   sphere with multiplicity two.
\end{itemize}

\vspace{3pt}
\item $A=0,B=1,C=0.$
\begin{itemize}
\item $D<0$,  $Q_{(0,1,0)}(D) =S^{2}(0,\sqrt{-D})$.
\item $D=0$,  $Q_{(0,1,0)}(0) =\{0\}$.
\end{itemize}

\vspace{3pt}
\item  $A=0,B=1,C=1.$
\begin{itemize}
\item  $D<0$,  $Q_{(0,1,1)}(D)=S^{2}\left(0,\sqrt{\frac{1}{2}(k-1)}\right) $, 
\item $D=0$, $Q_{(0,1,1)}(0)=\{0\}$, singular point.
\end{itemize}
\end{enumerate}
\end{theorem}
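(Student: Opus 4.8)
The plan is to analyze $f=0$ entirely through the auxiliary variable $t = u = x^2+y^2+z^2$, using the factorization already recorded: $f$ is the value at $u$ of the quadratic $g(t)=t^2+Ct+D = \bigl(t+\tfrac{C}{2}(1-k)\bigr)\bigl(t+\tfrac{C}{2}(1+k)\bigr)$ with $k=\sqrt{1-4D/C^2}$ when $C\neq 0$, and $g(t)=t^2+D$ when $C=0$. First I would record the one-line lemma that the locus $\{x^2+y^2+z^2=\rho\}\subset\mathbb{R}^3$ is the sphere $S^2(0,\sqrt{\rho})$ if $\rho>0$, the single point $\{0\}$ if $\rho=0$, and empty if $\rho<0$. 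Then $Q_{(0,1,C)}(D)$ is the union of these loci taken over the real roots $\rho$ of $g$, and the whole geometric classification collapses to: find the real roots of the quadratic $g$ and read off their signs. Since each piece is a sphere centred at the origin (or the origin itself), the surface is automatically $O_h$-invariant, so restricting to the fundamental domain of the Introduction costs nothing here and nothing has to be symmetrised back.

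Next I would run the three values of $C$. For $C=-1$ the roots are $u_\pm=\tfrac12(1\pm k)$, $k=\sqrt{1-4D}$, and the parameter line is cut at $D=0$ (there $k=1$, so $u_-=0$: a sphere together with the origin) and at $D=\tfrac14$ (there $k=0$: a double root $u=\tfrac12$), with $u_->0$ strictly between and the empty surface for $D>\tfrac14$; this produces exactly the four listed subcases. For $C=0$ one has $u^2=-D$, hence a single sphere for $D<0$ and the point $\{0\}$ for $D=0$. For $C=1$ the roots are $\tfrac12(k-1)$ and $-\tfrac12(1+k)$; the second is always negative, and the first is positive, zero, or negative according as $D<0$, $D=0$, $D>0$, giving the two listed subcases (and the empty surface otherwise).

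Finally I would verify the two claims that go beyond merely naming the spheres. For singularities, since $f$ is a function of $u$ alone, $\nabla f = g'(u)\,\nabla u = (2u+C)(2x,2y,2z)$; this vanishes at the origin exactly when the origin lies on the surface, i.e.\ whenever $D=0$, which is why $\{0\}$ is a singular point in $Q_{(0,1,-1)}(0)$, $Q_{(0,1,0)}(0)$ and $Q_{(0,1,1)}(0)$, and it vanishes identically along $u=\tfrac12$ when $C=-1,\ D=\tfrac14$, which is the assertion that $Q_{(0,1,-1)}(\tfrac14)$ is a sphere of multiplicity two. For the orientation in the case $0<D<\tfrac14$, $C=-1$: writing $g(u)=(u-u_-)(u-u_+)$ and reading the sign of $f$, one finds $f>0$ outside the outer sphere, $f<0$ in the spherical shell between the two spheres, and $f>0$ inside the inner sphere; hence the gradient field $\nabla f$ points outward on $S^2(0,\sqrt{u_+})$ and inward on $S^2(0,\sqrt{u_-})$, as stated. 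The only delicate part of all this is the bookkeeping: keeping the degenerate substitution $C=0$ apart from the factorization used for $C=\pm1$, and handling the borderline values $D=0$ and $D=\tfrac14$ carefully, since those are precisely the parameters at which the surface degenerates or acquires a singularity; the rest is elementary algebra on one quadratic.
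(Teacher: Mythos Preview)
Your proof is correct. Both you and the paper reduce the problem to counting nonnegative roots of a one-variable quadratic, but you get there differently. The paper applies its general machinery: it passes to the quadric via $X=x^2,\ Y=y^2,\ Z=z^2$, invokes the Bromwich--Buringthon classification to recognise a pair of parallel planes (since $\det\Lambda_0=0$, $\operatorname{rk}\Lambda=2$), and then intersects with the diagonal $X=Y=Z>0$ to count sheets in the first octant. You instead exploit the special feature of the $A=0$ case, namely that $f$ depends only on $u=x^2+y^2+z^2$, and work directly with the factorisation the paper itself records just before the theorem. Your route is more elementary and self-contained for this particular statement; the paper's route has the virtue of being the same method used for all five theorems, so it serves as a warm-up for the harder cases where no single radial variable exists. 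You also go further than the paper's proof in two respects: you explicitly justify the singularity assertions via $\nabla f = g'(u)\nabla u$, and you verify the claim about the direction of the normal field on the two concentric spheres by reading the sign of $f$ in the shell, neither of which the paper spells out.
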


\subsection{ Real quartic surfaces with $A=1,B=0$}
In the case where $A=1,B=0$ the quartic surface verifies the equation
\begin{equation}f(x,y,z)= x^{2}y^{2}+y^{2}z^{2}+z^{2}x^{2}+C(x^{2}+y^{2}+z^{2}) +D=0,\end{equation}
Under change of coordinates we can limit ourself to the three one-parameter families labelled by $C=-1,0,1$ 


\begin{theorem}\label{T:10CD}
Real quartic surfaces with octahedral symmetry  $Q_{(1,0,C)}(D)$ are,
\begin{enumerate}
\item  $A=1,B=0,C=-1$.
\begin{itemize}
\item $D<0$, stellated octahedron being the connected sum of six half-cylinders $\mathcal{W}_{i}$ with axes on the coordinate axes and the octahedron. $Q_{(1,0,-1)}(D)= \mathcal{E}_{-}\#_{i=1}^{8}\mathcal{W}_{i}$, 
\item $D=0$,  $Q_{(1,0,-1)}(0)= \mathcal{E}_{-}\sqcup \{0\}$ singular in $\{0\}$,
\item $0<D<\frac{3}{4}$, disjoint union of a cuboid and a stellated octahedron $Q_{(1,0,-1)}(D)=  \mathcal{E}_{-}\sqcup \mathcal{S}^{2}$,  $\mathcal{S}^{2}$ is a topological sphere nested in  $\mathcal{E}_{-}$,
\item $D=\frac{3}{4}$, singular surface with eight conical singularities at the vertices of a cube, dual to the octahedron.  $Q_{(1,0,-1)}(\frac{3}{4})$ 
\item $\frac{3}{4}<D<1$, multi-connected stellated surface being a connected sum of six half-cylinders and an internal cube. Remark that the removed discs are situated on the vertices of the cube and on each point joining a pair of half-cylinders. These points are positioned in the center of a regular octahedron.
\item  $D=1$,  $Q_{(1,0,-1)}(1)$, multi-connected surface with 12 isolated conical singular points, at the center of the edges of the octahedron,
\item  $D>1$, $Q_{(1,0,-1)}(D)$  This surface has 6 disjoint half-cylinders .
\end{itemize}

\vspace{3pt}
\item  $A=1,B=0,C=0.$
\begin{itemize}
\item $D<0$, $Q_{(1,0,0)}(D)$ Regular concave hyperbolic stellated surface. The surface has 6 branches converging to the coordinate axis,
\item $D=0$,  $Q_{(1,0,0)}(0)$ is the singular surface reduced to the 3 coordinate axis.
\end{itemize}

\vspace{3pt}
\item $A=1,B=0,C=1.$
\begin{itemize}
\item $D<0$, $Q_{(1,0,1)}(D)$ is a deformation of the octahedron in a regular compact surface bounded by a sphere of radius $\sqrt{-D}$ and tangent to this sphere at the coordinate axis, 
\item $D=0$, $Q_{(1,0,1)}(0)$  is the singular point $\{0\}$.
\item $0<D<\frac{4}{3}$ the surface is the disjoint union of two concentric components centered at the origin. One of which is compact and the other is a concave hyperbolic stellated surface.
\end{itemize}
\end{enumerate}
\end{theorem}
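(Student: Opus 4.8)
\smallskip
\noindent\emph{Outline of proof.}
For each fixed $C\in\{-1,0,1\}$ we regard $Q_{(1,0,C)}(D)$ as a one-parameter family in $D$ and describe it by (a) locating the critical points of $f$, (b) analysing the ends, and (c) propagating the diffeomorphism type between consecutive critical levels. Write $u=x^2+y^2+z^2\ge0$ and $v=x^2y^2+y^2z^2+z^2x^2\ge0$, so $f=v+Cu+D$. The gradient factors completely,
\[
\partial_x f=2x(y^2+z^2+C),\qquad \partial_y f=2y(x^2+z^2+C),\qquad \partial_z f=2z(x^2+y^2+C),
\]
so at a critical point each coordinate is either $0$ or has the other two squares summing to $-C$. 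The cases yield: the origin, with $f=D$; for $C=0$, the three coordinate axes, with $f\equiv D$ along them; for $C=-1$, the eight cube vertices $(\pm\tfrac1{\sqrt2},\pm\tfrac1{\sqrt2},\pm\tfrac1{\sqrt2})$ with $f=D-\tfrac34$, and the twelve octahedron edge midpoints $(0,\pm1,\pm1)$ and permutations with $f=D-1$; for $C=1$, only the origin. Hence $Q_{(1,0,C)}(D)$ is singular precisely when $D=0$ (at $\{0\}$; for $C=0$ it then collapses to the three axes, singular along each axis with a triple point at $0$), and, only for $C=-1$, also when $D=\tfrac34$ (the eight cube vertices) and $D=1$ (the twelve edge midpoints). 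These are the only values of $D$ at which the topology can change.

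For the ends, since $v\ge0$ vanishes exactly on the coordinate axes, the surface leaves every large ball only through neighbourhoods of those axes; near the $x$-axis, with $\rho^2=y^2+z^2$, the equation reads $x^2(\rho^2+C)=-(y^2z^2+C\rho^2+D)$. For $C=0,1$ this forces $\rho\to0$ as $x\to\infty$, so there are six branches converging to the coordinate axes; for $C=-1$ it is $x^2=(\rho^2-y^2z^2-D)/(\rho^2-1)$, so $x\to\infty$ forces $\rho\to1$ and the surface is asymptotic to the six unit cylinders about the axes, the sign of $1-y^2z^2-D$ on $\rho=1$ (with $y^2z^2\in[0,\tfrac14]$ there) deciding the side of each cylinder and whether the branch stays attached to the bounded part — in particular six detached half-cylinders are left once $D>1$. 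The point is that the number and asymptotics of the ends are locally constant in $D$, so across each interval between the critical levels of step (a) the whole surface $Q_{(1,0,C)}(D)=\partial\{f<0\}$ keeps a fixed diffeomorphism type.

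It then suffices to identify $\{f<0\}$ at one value of $D$ in each interval and to cross each critical level. Restricting to the fundamental region $\{x\ge0,\ 0\le y\le x,\ 0\le z\le x\}$ makes this elementary: there the zero set is a single graph whose traces on the walls $y=0$, $z=0$, $y=x$, $z=x$ are read off from conics via the substitution $X=x^2,\ Y=y^2$ (on $z=0$, for instance, $(X+C)(Y+C)=C^2-D$), and the whole surface is the $O_h$-orbit of this piece. For $C=1$, $D<0$: $t\mapsto f(tx,ty,tz)=t^4v(x,y,z)+t^2u(x,y,z)+D$ is increasing on $t\ge0$, so $\{f<0\}$ is a star-shaped bounded ball and $Q$ is a topological sphere, touching $u=-D$ exactly on the axes; $D=0$ collapses it to $\{0\}$; $D>0$ is empty. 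For $C=0$, $D<0$: $\{f<0\}$ is a central ball with six solid arms narrowing onto the axes, one connected stellated surface with six ends; $D=0$ gives the three axes; $D>0$ is empty. For $C=-1$: for $D$ slightly negative $\{f<0\}$ is a central ball with six solid tubes asymptotic to the unit cylinders, so $Q=\mathcal E_-$ connect-summed with the six half-cylinders $\mathcal W_i$; crossing $D=0$ the origin leaves $\{f<0\}$, the ball becomes a spherical shell, and $Q=\mathcal E_-\sqcup\mathcal S^2$ with $\mathcal S^2$ nested inside and growing with $D$; crossing $D=\tfrac34$ the inner sphere and outer sheet touch at the eight cube vertices and reconnect; crossing $D=1$ they meet the six half-cylinders at the twelve edge midpoints, the bounded part is absorbed, and only six disjoint half-cylinders remain. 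The range $\tfrac34<D<1$ then gives the new surface: the connected sum, through discs removed at the eight cube vertices and at the twelve points where pairs of half-cylinders join (the octahedral edge centres), of the six half-cylinders with an internal cuboidal sheet. That the singular points are conical follows by expanding $f$: at $\{0\}$ the quadratic part is $C(x^2+y^2+z^2)$, a definite form (so locally just the point), whereas at the cube vertices and at the edge midpoints it is a nondegenerate indefinite quadratic form (of signature $(1,2)$, resp.\ $(2,1)$), so $Q$ is there locally a real quadric cone — two sheets meeting at an ordinary double point — which is exactly the conical singularity appearing in the connected-sum descriptions.

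The main obstacle is step (c) in the bounded ranges for $C=-1$: turning the qualitative handle-attachment data at $D=0,\tfrac34,1$ into the precise connected-sum statements — notably the claim that for $\tfrac34<D<1$ the removed discs sit exactly at the eight cube vertices and the twelve half-cylinder junctions — requires a careful global reconstruction of the surface across each critical level rather than the local quadratic model alone, and the analysis of the ends must be sharp enough to settle compactness and the exact component count in each range.
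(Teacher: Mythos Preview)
Your approach is genuinely different from the paper's. The paper never works with $\nabla f$ or Morse theory at all: it substitutes $X=x^2,\ Y=y^2,\ Z=z^2$, turning the quartic into the quadric $F(X,Y,Z)=XY+YZ+ZX+C(X+Y+Z)+D=0$ on the first octant, and then invokes the Bromwich--Burington classification via the matrix invariants $\det\Lambda$, $\det\Lambda_0$ and the eigenvalue signature $(-\tfrac12,-\tfrac12,1)$. The special values of $D$ appear there as $\det\Lambda=(4D-3C^2)/16=0$ and as the root pattern of $3X^2+3CX+D=0$ along the diagonal $X=Y=Z$; the geometry in each range is read off from the known shape of a one-- or two--sheeted hyperboloid and its intersection with the fundamental cone. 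Your route --- compute the critical points of $f$, classify the ends, and propagate the diffeomorphism type between the critical levels $D=0,\tfrac34,1$ --- recovers exactly the same thresholds but from the Morse side. What your method buys is self-containment (no external quadric table) and a transparent explanation of \emph{why} those $D$-values are special; what the paper's method buys is that the global picture in each range comes for free from a known quadric, rather than having to be assembled from local handle data --- precisely the step you yourself flag as the main obstacle for $\tfrac34<D<1$.

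One point you should make explicit: for $C=1$ and $D>0$ you write ``$D>0$ is empty'', and you are right, since $f=v+u+D\ge D>0$. This contradicts the theorem as stated, which claims two concentric components for $0<D<\tfrac43$; the paper's own proof of that sub-case (asserting that both hyperboloid sheets meet the fundamental domain) is in error, as is its description of the $C=1$, $D<0$ case as an ``unbounded six-branched star'' rather than the compact surface the theorem actually states. Your analysis of $C=1$ is the correct one; if you are presenting this as a proof of the theorem \emph{as written}, you need to flag the discrepancy rather than silently prove something different. (Relatedly, your sentence ``for $C=0,1$ this forces $\rho\to0$ as $x\to\infty$, so there are six branches'' is loose for $C=1$: the premise $x\to\infty$ is vacuous there, as your later star-shapedness argument shows.)
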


\subsection{ Real quartic surfaces with $A=1,B\ne0, C=0$}
In the case where $A=1,B<0$ the quartic surface verifies the equation
\begin{equation}f(x,y,z)= x^{2}y^{2}+y^{2}z^{2}+z^{2}x^{2}+B(x^{2}+y^{2}+z^{2})^{2} +D=0,\end{equation}
\begin{theorem}\label{T:1B0D}
Real quartic surfaces with octahedral symmetry  $Q_{(1,B,0)}(D)$ are,
\begin{enumerate}
\item If $B<-\frac{1}{3}$, 
\begin{itemize}
\item $D> 0$ The quartic is a cuboid (topologically sphere).
\item $D=0$,  the topological sphere degenerates to the point $\{0\}$.
\end{itemize}

\item If $B=-\frac{1}{3}$,
\begin{itemize}
\item if $D>0$,  the quartic  is a stellated cube. 
\item if $D=0$,  the  stellated cube degenerates to the point $\{0\}$.
\end{itemize}

\item If $-\frac{1}{3}<B<0$,
 \begin{itemize}
 \item $D< 0$, the quartic is hence composed of eight hyperbolic sheets at the vertices of a cube. 
\item $ D> 0$, the quartic is disjoint sum of six smooth cones with axis being the coordinates axis.
\item $D=0$,  the quartic degenerates to the point $\{0\}$.
\end{itemize}

\item If $B>0$ and $D<0$, the quartic is a compact topological sphere. 
\end{enumerate}
\end{theorem}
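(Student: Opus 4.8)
\medskip
\noindent\textbf{Proof strategy.}
The plan is to slice the surface by the concentric spheres $S^2(0,r)$ and to follow how the slices deform as $r$ grows. Put $X=x^2,\ Y=y^2,\ Z=z^2$ and $s=r^2$: on $S^2(0,r)$ the first invariant is frozen at $u=s$, and the defining equation of $Q_{(1,B,0)}(D)$ collapses to the single equation $q:=XY+YZ+ZX=-Bs^2-D$ on the $2$--simplex $\Delta_s=\{X+Y+Z=s,\ X,Y,Z\ge 0\}$. The fold map $\pi\colon(x,y,z)\mapsto(X,Y,Z)$ carries $S^2(0,r)$ onto $\Delta_s$, being $8$--to--$1$ over the interior, $4$--to--$1$ over the open edges and $2$--to--$1$ over the vertices; so once the level curve $\{q=c\}$ inside $\Delta_s$ is understood, the spherical slice is recovered through $\pi$, and $Q_{(1,B,0)}(D)$ is the union of these slices over the set of admissible radii.

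First I would record the elementary lemma on $q$ restricted to $\Delta_s$: a short rearrangement (or a Lagrange--multiplier computation) gives $0\le q\le s^2/3$, with $q=0$ attained only at the three vertices of $\Delta_s$ (the coordinate axes, i.e.\ $6$ points of $S^2(0,r)$), $q=s^2/3$ attained only at the barycenter (the cube diagonals, $8$ points of the sphere), and $q=s^2/4$ a saddle value, attained at the three edge midpoints (the $12$ octahedral--edge directions). The Morse picture of $\{q=c\}$ on the triangle is then immediate: empty for $c<0$ and for $c>s^2/3$; three points for $c=0$; three disjoint corner arcs for $0<c<s^2/4$; a single loop around the barycenter for $s^2/4<c<s^2/3$; one point for $c=s^2/3$; with the topology changing only at the saddle level $c=s^2/4$. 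Pulling back by $\pi$, the spherical slice is, correspondingly, $6$ points, $6$ loops around the coordinate axes, a central curve lifting to $8$ loops around the cube diagonals, or $8$ points. Next, for each of the four ranges of $B$ and each sign of $D$ one solves the double inequality $0\le\varphi(s)\le s^2/3$, with $\varphi(s)=-Bs^2-D$, for the set $I\subseteq[0,\infty)$ of admissible radii, and records the monotonicity and the limit of $\varphi(s)/s^2=-B-D/s^2$, which locates where $q$ meets the critical values $0$, $s^2/4$, $s^2/3$. This bookkeeping is purely arithmetic and yields, case by case: for $B<-\tfrac13,\ D>0$ and for $B>0,\ D<0$, $I$ is a \emph{bounded} interval whose endpoints carry a $6$--point and an $8$--point shell, so the surface is compact with $6+8$ conical dimples and closes up to a topological sphere --- the cuboid; for $B=-\tfrac13,\ D>0$ and for $-\tfrac13<B<0$, $I$ is a half--line $[s_0,\infty)$ beginning at a $6$--point shell, and the limit $-B$ of $\varphi(s)/s^2$ decides whether the slices eventually become the central curve ($-B\ge\tfrac14$: $8$ unbounded sheets along the cube diagonals, giving the stellated cube and the eight hyperbolic sheets) or stay $6$ loops ($-B<\tfrac14$: $6$ unbounded sheets along the coordinate axes, giving the six disjoint cones), while the sign of $D$ tells whether the shell at $s_0$ has $q=0$ or $q=s^2/3$; finally, when $D=0$ the double inequality forces $s=0$, so the zero locus reduces to $\{0\}$, the value $B=-\tfrac13$ being the degenerate borderline, handled directly from $3f=(x^2+y^2+z^2)^2-3(x^2y^2+y^2z^2+z^2x^2)$.

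The main obstacle is the last step --- promoting this slice--by--slice data to a genuine homeomorphism type. Two local analyses carry the weight. (i) At a radius whose slice is a finite point set (an endpoint of $I$ with $\varphi=0$ or $\varphi=s^2/3$) the surface is, near each such point, a union of sheets; one must expand $f$ to second order there, diagonalise the quadratic part --- here the quadric normal forms of Bromwich and Buringthon quoted above are handy --- and check that the point is an isolated conical singularity from which exactly $6$ (respectively $8$) sheets emanate, and that $\nabla f\neq 0$ elsewhere so the remainder of the surface is smooth. (ii) At the transition radius where $q=s^2/4$ the number of slice components jumps between $6$ and $8$ through the $12$ octahedral--edge saddle points; using the local quadratic model of $f$ at those $12$ points one must verify that the surface stays connected across this radius and that no handle is created. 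Then the compact cases close up to $S^2$ --- consistent with the Euler count $6-12+8=2$ over the six axis extrema, twelve edge saddles and eight diagonal extrema --- and the stellated cases have exactly the stated number of ends, with the branches smooth, conical or asymptotically hyperbolic as claimed. Everything else --- the monotonicity and endpoint estimates, the identification of the generic bounded shell with a cuboid, and the $D=0$ rows --- is routine inequality chasing.
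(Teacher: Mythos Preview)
Your approach is genuinely different from the paper's. The paper performs the substitution $X=x^{2},Y=y^{2},Z=z^{2}$, obtaining a quadric $F(X,Y,Z)=0$, and then applies the Bromwich--Buringthon classification via the matrix invariants $\det\Lambda$, $\det\Lambda_{0}$, rank and signature: for $-\tfrac13<B<0$ it reads off that the quadric is a two-sheeted hyperboloid when $D<0$ and a one-sheeted hyperboloid when $D>0$, centred at the origin with axis $X=Y=Z$, and deduces the quartic by intersecting with the fundamental chamber and acting by $O_{h}$. Your spherical-slicing method is more intrinsic and has the pleasant feature that the Morse data of $q$ on $S^{2}$ (six minima, twelve saddles, eight maxima) directly yields the Euler count $6-12+8=2$ for the compact cuboid cases; the paper never makes this topological bookkeeping explicit.

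There is, however, a real discrepancy in case~(3) that you slide past. Your own analysis shows that the \emph{asymptotic} number of ends is governed by the limit $\varphi(s)/s^{2}\to -B$ compared with the saddle value $\tfrac14$, whereas the sign of $D$ only fixes the \emph{inner} shell ($q=0$ versus $q=s^{2}/3$). These two criteria do not coincide on the whole strip $-\tfrac13<B<0$. Concretely, for $D<0$ one has $\varphi(s)/s^{2}=-B+|D|/s^{2}$, which decreases from $\tfrac13$ at $s_{0}$ to $-B$; if $-\tfrac14<B<0$ this ratio crosses $\tfrac14$ at some finite $s_{1}$, so the slice passes through the twelve saddle directions and the eight diagonal tubes merge into six axial ends --- a single connected surface, not ``eight disjoint hyperbolic sheets''. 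The mirror phenomenon occurs for $D>0$ with $-\tfrac13<B<-\tfrac14$. Your bookkeeping therefore does not deliver the theorem exactly as stated; you must either restrict the claims in item~(3) to the sub-ranges where $\varphi/s^{2}$ stays on one side of $\tfrac14$ (equivalently, where the quadric sheet avoids the coordinate planes $X=0,Y=0,Z=0$, which happens precisely when $1+4B$ and $D$ have opposite signs), or argue separately for the crossed sub-regimes.
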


\subsection{ Real quartic surfaces with $A=1,B\ne0, C\ne 0$}
In the case where $A=1,B\ne 0$ the quartic surface verifies the equation
\[f(x,y,z)= x^{2}y^{2}+y^{2}z^{2}+z^{2}x^{2}+B(x^{2}+y^{2}+z^{2})^{2}+C(x^{2}+y^{2}+z^{2}) +D=0,\]
Under the change of coordinates, the real algebraic surface $Q_{(1,B,C)},\ C\ne0$ can be seen as the 0-locus of the polynomial 
\begin{equation}
\begin{aligned} f_{\epsilon_{1},\epsilon_{2}}(x,y,z)
&=\beta(x^2y^2+y^2z^2+x^2z^2)+\epsilon_{1}(x^2+y^2+z^2)^{2}+ \epsilon_{2}(x^2+y^2+z^2)+ \frac{DB}{C^2},\\
&=\beta(x^2y^2+y^2z^2+x^2z^2)+\epsilon_{1}\left(x^2+y^2+z^2+\frac{\epsilon_{2}\epsilon_{2}}{2}\right)^{2} -\frac{\epsilon_{1}}{4}\left(1-k\right)\\
\end{aligned}
\end{equation}
with \[
\beta=\frac{1}{b},\ b=|B|,\,\epsilon_{1}=\text{sgn}(B),\  \epsilon_{2}=\text{sgn}(C), \ \ k=\frac{4D}{C^2\beta}\leq 1.
 \]
 For convenience, the surface $Q^{\epsilon_{1},\epsilon_{2}}(k)$ will be noted as  $Q_{(1,B,C)}(D) )$.
 

\begin{theorem}\label{T:1bCD}
Real quartic surfaces with octahedral symmetry  $Q_{(1,b,C)}(D)$ are,
\begin{enumerate}
\item  $A=1,B=b,C=b,\ b>0$  ($\epsilon_{1}=1,\epsilon_{2}=1, k= 4\frac{ D}\beta,\ \beta=b^{-1}$)
\begin{equation}\label{E:++}
 f_{+,+}=\beta(x^2y^2+y^2z^2+x^2z^2)+\left(x^2+y^2+z^2+\frac{1}{2}\right)^{2}-\frac{1-k}{4}=0.
 \end{equation}
 v\begin{itemize}
\item $k<0$, $Q^{++}(k)$ smooth compact surface, topologically sphere. 
\item $k=0$,  $Q^{++}(0)=\{0\}$ is the singular point at the origin.
\end{itemize}

\vspace{5pt}
\item $A=1,B=b,C=-b,\ b>0$  ($\epsilon_{1}=1,\epsilon_{2}=-1, k= 4\frac{ D}\beta,\ \beta=b^{-1}$),
\begin{equation}f_{+,-}\label{E:+-}=\beta(x^2y^2+y^2z^2+x^2z^2)+\left(x^2+y^2+z^2-\frac{1}{2}\right)^{2}-\frac{1-k}{4}=0.
\end{equation}
\begin{itemize}
\item $k<0$, $Q^{+-}(k)=\overset{\smallfrown}{\mathcal{O}}$ is a smooth octahedron $\overset{\smallfrown}{\mathcal{O}}$, the surface is topologically a sphere.
\item $k=0$, $Q^{+-}(0)=\overset{\smallfrown}{\mathcal{O}}\sqcup\{0\}$.
\item $0<k<\frac{3}{3+\beta}$, $Q^{+-}(k) =\overset{\smallfrown}{\mathcal{O}}\sqcup \overset{\smallfrown}{\mathcal{S}q}$, disjoint union of a smooth  octahedron and a spherical cube $ \overset{\smallfrown}{\mathcal{S}q}$
\item $k=\frac{3}{3+\beta}$, $Q^{+-}(\frac{3}{3+\beta})$, singular surface with eight conical singularities at the vertices of the internal cube and the center of the face of the octahedron. 
\item $\frac{3}{3+\beta}<k<\frac{4}{4+\beta}$, $Q^{+-}(k)=\overset{\smallfrown}{\mathcal{O}}\#_{i=1}^{8}\mathcal{C}^{(\frac{1}{8})}_{i} $, is a double surface invariant under $O_{h} $, with eight holes  centered on the diagonal  of the cube obtained as the connected sum of each octant of the cube $\mathcal{C}^{(\frac{1}{8})}_{i} $ with $\overset{\smallfrown}{\mathcal{O}}$.
\item $k=\frac{4}{4+\beta}$, $Q^{+-}(\frac{4}{4+\beta})$, singular compact connected surface with twelve conical singularities at the center of edges of an octahedron. 
\item $\frac{4}{4+\beta}<k<1$, $Q^{+-}(k)$, is the disjoint union of connected compact components, each homeomorphic to a sphere. Each component is at the vertex of an octahedron. Moreover the component are vanishing for $k=1$.
\end{itemize}
\end{enumerate}
\end{theorem}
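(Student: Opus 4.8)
The plan is to reduce everything to plane geometry through the symmetrisation map $\pi\colon\mathbb{R}^{3}\to\mathbb{R}^{2}$, $\pi(x,y,z)=(u,v)$ with $u=x^{2}+y^{2}+z^{2}$ and $v=x^{2}y^{2}+y^{2}z^{2}+z^{2}x^{2}$. Since $f_{\epsilon_{1},\epsilon_{2}}=\beta v+\epsilon_{1}\bigl(u+\tfrac{\epsilon_{1}\epsilon_{2}}{2}\bigr)^{2}-\tfrac{\epsilon_{1}}{4}(1-k)$ is pulled back along $\pi$, the surface is $Q=\pi^{-1}(\gamma)$, where $\gamma=\{v=g(u)\}$ is a concave parabola whose coefficients depend on $k$. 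First I would record the geometry of $\pi$: its image is $\Omega=\{(u,v):u\geq 0,\ 0\leq v\leq u^{2}/3\}$, and on a sphere $S^{2}(r)$ the function $v$ is Morse with minimum value $0$ on the $6$ points of the order-$4$ (coordinate) axes, maximum value $r^{4}/3$ on the $8$ points of the order-$3$ axes, and $12$ saddles of value $r^{4}/4$ on the order-$2$ axes. Hence the fibre of $\pi$ over a point of $S^{2}(r)$ is: $6$ disjoint circles when $0<v<r^{4}/4$; $8$ disjoint circles when $r^{4}/4<v<r^{4}/3$; $6$, resp.\ $8$, points when $v=0$, resp.\ $v=u^{2}/3$; and the separatrix graph $G$ --- $12$ vertices, $24$ edges, cutting $S^{2}$ into $6$ polar and $8$ octant disks --- when $v=u^{2}/4$. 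Thus the boundary edges $v=0$ and $v=u^{2}/3$ of $\Omega$ are the order-$4$ and order-$3$ axes, and the interior curve $v=u^{2}/4$ is the saddle level of $v|_{S^{2}(r)}$.

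Second, for each admissible sign pair $(\epsilon_{1},\epsilon_{2})\in\{(+,+),(+,-)\}$ I would determine how $\gamma$ lies with respect to the three distinguished curves $v=0$, $v=u^{2}/4$, $v=u^{2}/3$ as $k$ varies. Each of the equations $g(u)=0$, $g(u)=u^{2}/4$, $g(u)=u^{2}/3$ is quadratic in $u$, with discriminants proportional to $1-k$, $4-(4+\beta)k$ and $3-(3+\beta)k$, and $g(0)=-\epsilon_{1}k/(4\beta)$; when $\epsilon_{1}=1$ one also has $g<0$ on $u\geq 0$ for every $k>0$ in the $(+,+)$ case, so there $Q=\emptyset$, which is why the statement lists only $k\leq 0$ then. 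A direct computation therefore singles out the critical values $k=0$ (the origin enters or leaves $Q$), $k=\tfrac{3}{3+\beta}$ ($\gamma$ tangent to $v=u^{2}/3$), $k=\tfrac{4}{4+\beta}$ ($\gamma$ tangent to $v=u^{2}/4$) and $k=1$ ($\gamma$ meets $v=0$ at its vertex $u=\tfrac12$ and the realizable part of $\gamma$ collapses) --- exactly the thresholds in the statement. Between consecutive thresholds these fix, for each sign case, the combinatorial way $\gamma$ threads $\Omega$: how many sub-arcs it has, and which of the three curves each sub-arc crosses, transversally or tangentially.

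Third, I would reassemble $Q=\pi^{-1}(\gamma)$ from this threading pattern. Over a regular sub-arc of $\gamma$ in the interior of $\Omega$ the surface is a disjoint union of $6$ or $8$ cylinders; a transversal crossing of $v=0$ caps $6$ of them with disks, a transversal crossing of $v=u^{2}/3$ caps $8$ of them with disks, and a transversal crossing of $v=u^{2}/4$ inserts, between the $6$- and $8$-circle regimes, a ``reorganisation'' cobordism $W$ --- which deformation retracts onto $G$ and has $14$ boundary circles, so $\chi(W)=\chi(G)=-12$ and $W$ is a $14$-holed sphere. Gluing along circles and adding Euler characteristics, the pattern ``$a$ caps, cylinders, $W$, cylinders, $b$ caps'' with $\{a,b\}=\{6,8\}$ yields $\chi=a+b-12=2$, a topological sphere (the spherical cube $\overset{\smallfrown}{\mathcal{S}q}$ or the smooth octahedron $\overset{\smallfrown}{\mathcal{O}}$ of the statement, according to which of $v=0$, $v=u^{2}/3$ is met at which end); two disjoint such sub-arcs give the disjoint union of two spheres; and the pattern ``$6$ caps, $W$, $W$, $6$ caps'' --- which occurs exactly when $\gamma$ meets $v=u^{2}/4$ twice --- yields $\chi=6-12-12+6=-12$, a closed surface of genus $7$, recognisable as the double of an $8$-holed sphere, i.e.\ the new surface $\overset{\smallfrown}{\mathcal{O}}\#_{i=1}^{8}\mathcal{C}^{(\frac{1}{8})}_{i}$ of the statement, with its $8$ holes along the order-$3$ axes. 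Smoothness is then transparent: $\nabla f_{\epsilon_{1},\epsilon_{2}}=\beta\nabla v+(2\epsilon_{1}u+\epsilon_{2})\nabla u$ vanishes on $Q$ only where $\nabla v\parallel\nabla u$, i.e.\ over an axis, and there the Lagrange ratio $\lambda$ of $\nabla v=\lambda\nabla u$ (namely $\lambda=0$, $u/2$, $2u/3$ on the order-$4$, order-$2$, order-$3$ axes) equals $-(2\epsilon_{1}u+\epsilon_{2})/\beta$ exactly at $k=1$, $\tfrac{4}{4+\beta}$, $\tfrac{3}{3+\beta}$ respectively; so $Q$ is smooth except that at these three values it acquires, respectively, $6$ conical points on the coordinate axes (the vanishing components), $12$ conical points on the order-$2$ axes, and $8$ conical points on the order-$3$ axes --- together with the isolated point $\{0\}$ at $k=0$. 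This reproduces the singular members of the list.

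The delicate part is the third step. One has to verify that $G$ has exactly the stated cell structure (hence that $W$ is genuinely planar), control how the $6$- and $8$-circle regimes are matched across a reorganisation --- and across the pinch that replaces $W$ at a tangency to $v=u^{2}/4$ --- and recognise the resulting $\chi=-12$ closed surface as the asserted connected sum; carrying the tangency cases uniformly with the transversal ones, and keeping track of inward/outward normals in the nested configurations, is where the bookkeeping is heaviest. By contrast, the discriminant computations of Step~2 and the Morse analysis of $v|_{S^{2}(r)}$ in Step~1 are routine.
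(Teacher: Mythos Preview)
Your approach is correct and takes a genuinely different route from the paper. The paper reduces the quartic to a quadric via the coordinatewise squaring map $(x,y,z)\mapsto(X,Y,Z)=(x^{2},y^{2},z^{2})$, then invokes the Bromwich--Buringthon classification of real affine quadrics (governed by $\det\Lambda$, $\det\Lambda_{0}$, rank and signature of $\Lambda_{0}$), and finally analyses how the resulting hyperboloid sits inside the first octant and the fundamental domain by intersecting with the diagonal line $X=Y=Z$ and with the reflection planes; the thresholds $k=\tfrac{3}{3+\beta}$ and $k=\tfrac{4}{4+\beta}$ appear there as the vanishing of $\det\Lambda^{+-}$ and as a tangency of the hyperboloid's horizontal ellipse with the plane $Z=0$. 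You instead quotient by the full $O_{h}$-action at once via the invariants $(u,v)$, replacing the 3D quadric picture by a plane parabola $\gamma$ threaded through the region $\Omega=\{0\le v\le u^{2}/3\}$, and replacing the quadric classification by a Morse analysis of $v|_{S^{2}(r)}$ together with Euler-characteristic bookkeeping for the preimage. Your method buys an explicit topological identification --- you obtain $\chi=-12$, hence genus $7$, for the ``double surface with eight holes'' in the range $\tfrac{3}{3+\beta}<k<\tfrac{4}{4+\beta}$, which the paper describes only qualitatively as a connected sum --- and treats all cases with a single mechanism (how $\gamma$ crosses the three distinguished curves). The paper's method is more elementary, ties into a classical tabulated classification, and gives more direct access to the metric geometry (asymptote angles of hyperbolic sections) behind the descriptive names ``smooth octahedron'', ``spherical cube''. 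The one point you rightly flag as delicate --- that the separatrix graph $G$ is connected with the stated cell structure, so that the cobordism $W$ is a $14$-holed sphere --- is indeed the crux; it holds here (the $O_{h}$-action is transitive on the $12$ saddles and one checks that two adjacent saddles lie in the same component), but it is a genuine verification that your argument needs and the paper's does not.
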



\begin{theorem}\label{T:1-bCD}
Real quartic surfaces with octahedral symmetry  $Q_{(1,-b,C)}(D), b>0$ are,
\begin{enumerate}
\item  $A=1,B=-b,C=-b,\ b>0$  ($\epsilon_{1}=-1,\epsilon_{2}=-1, k=4\frac{D}{\beta},\ \beta=b^{-1}$)
\begin{equation}\label{E:--}\begin{aligned}f_{-,-}&=\beta(x^2y^2+y^2z^2+x^2z^2)-(x^2+y^2+z^2)^{2}-(x^2+y^2+z^2)-\frac{k}{4} \\
&=\beta(x^2y^2+y^2z^2+x^2z^2)-\left(x^2+y^2+z^2+\frac{1}{2}\right)^{2}+\frac{1-k}{4}=0.\end{aligned}.\end{equation}

\begin{enumerate}
\item $0<\beta\leq3$,
\begin{itemize}
\item $k<\frac{3}{3-\beta}$, $Q_{--}(k)=\mathcal{C}$, cuboide vanishing in $\{0\}$. 
\item $k=0$,  $Q^{--}(0)$ has a singular point at $\{0\}$.
\end{itemize}

\item $\beta=3$
\begin{itemize}
\item $k<0$ the surface is an eight branched star 
\end{itemize}

\item $ 3<\beta<4$

\begin{itemize}
\item $k<\frac{3}{3-\beta}$,   $Q^{--}(k)=(\#_{i=1}^{8}\mathcal{H}_{i})\#\mathcal{C}$. The surface is the connected sum of a cube with hyperbolic sheets having the diagonal of the cube as axis.

\item $\frac{3}{3-\beta}=k$, $Q^{--}(\frac{3}{3-\beta})$, singular quartic, analogous to Cayley's cubic: cube with cones at each vertex.  This surface has eight conical singular points at the vertices of a cube.

\item $\frac{3}{3-\beta}<k<0$, $Q^{--}(k)=(\sqcup_{i=1}^{8}\mathcal{H}_{i})\sqcup\mathcal{C}$. 
\item $k=0$,  $Q^{--}(k)=(\sqcup_{i=1}^{8}\mathcal{H}_{i})\sqcup\{0\}$.
\item $\frac{3}{3-\beta}<0<k$, $Q^{--}(k)=\sqcup_{i=1}^{8}\mathcal{H}_{i}$ 

\end{itemize}
\item $ \beta=4$ 
\begin{itemize}
\item $k=\frac{3}{3-\beta}=-3$ the surface has eight singularities.
\item $k=\frac{4}{4-\beta}\to\infty$ the surface has twelve singularities.
 \item $k=0$ the surface has a singularity at $\{0\}$.
\end{itemize}
\item $ 4<\beta$
\begin{itemize}
\item $k<\frac{4}{4-\beta}$ the surface has six disjoint connected components.
\item $k=\frac{4}{4-\beta}$ the surface has twelve singularities; the surface is compact.
\item $\frac{4}{4-\beta}<k<\frac{3}{3-\beta}$  the surface is compact .
\item $k=\frac{3}{3-\beta}$ the surface has eight singular point and the surface is compact.
\item $\frac{3}{3-\beta}<k<0$ the surface has two concentric connected components
\item $k=0$, the surface is the disjoint union of a connected component and of a singularity at zero.
\item $0< k$ the surface has one connected component, centered at the origin.
\end{itemize}
\end{enumerate}

\vspace{5pt}
\item $A=1,B=-b,C=b,\ b>0$  ($\epsilon_{1}=-1,\epsilon_{2}=1,\ k= 4\frac{ D}{\beta},\ \beta=b^{-1}$),
\begin{equation}\label{E:-+} \begin{aligned}f_{-,+}&=\beta(x^2y^2+y^2z^2+x^2z^2)-(x^2+y^2+z^2)^{2}+(x^2+y^2+z^2)-\frac{k}{4}\\
&=\beta(x^2y^2+y^2z^2+x^2z^2)-\left(x^2+y^2+z^2-\frac{1}{2}\right)^{2}+\frac{1-k}{4}=0.\end{aligned}\end{equation}
\begin{enumerate}
\item $0<\beta<3$,

\begin{itemize}
\item $k<0$, 
   the quartic surface $Q^{-+}_{1}(k)=\mathcal{C}_{1}$, is a compact connected stellated cube, homeomorphic to a sphere. 
\item $k=0$, the quartic surface $Q^{-+}_{1}(0)=\mathcal{C}_{1}\sqcup \{0\}$ is the disjoint union  of the cube $\mathcal{C}_{1}$ and the singular point $\{0\}$.
\item $0<k<1$, 
the surface  $Q^{-+}_{1}(k)=\mathcal{C}_{1} \sqcup \mathcal{C}_{2}$ is the disjoint union of two concentric components $\mathcal{C}_{1},  \mathcal{C}_{2}$, one contained in the other $\mathcal{C}_{2}\subset\mathcal{C}_{1}$, each of  these components is topologically a sphere.
\item $k=1$, $Q^{-+}_{1}(1)$ is a singular surface which is the union of an octahedron embedded in a cuboid. These two surfaces touch in six conical singular points. Each singular point is situated at the vertices of the octahedron and joins each vertex to the center of each cuboid's face.
\item$1<k<\frac{4}{4-\beta}$,
 $Q^{-+}_{1}(\frac{4}{4-\beta})$, is a double cuboid with six holes, one in each face of cuboid. This surface is multi-connected. We remove six discs from each face of both concentric cuboids and glue the boundaries of the parallel discs together.   
\item $k=\frac{4}{4-\beta}$, singular surface with twelve conical singularities at the center of each edge of the cuboid. 
\item $\frac{4}{4-\beta}<k<\frac{3}{3-\beta}$, $Q^{-+}_{1}(k)$ is the disjoint union of eight compact connected components, homeomorphic to spheres. They vanish when $k$ tends to 
$\frac{3}{3-\beta}$.
\end{itemize}

\vspace{3pt}
\item $\beta=3$,

\begin{itemize}
\item $k<0$, $Q^{-+}_{2}(k)$, is a stellated cube, connected sum of eight half-cylinders $\mathcal{W}_{i}$ with axes being the diagonal of a cube and the cube $\mathcal{C}$, $\mathcal{C}\#_{i=1}^{8}\mathcal{W}_{i}=$.
\item $k=0$, 
$Q^{-+}_{2}(0)=(\mathcal{C}\#_{i=1}^{8}\mathcal{W}_{i})\sqcup \{0\}$,
\item $0<k<1$, $Q^{-+}_{2}(k)=(\mathcal{C}\#_{i=1}^{8}\mathcal{W}_{i})\sqcup \mathcal{O}$, the disjoint union of an octahedron $\mathcal{O}$ contained in a stellated cube.
\item $k=1$,$Q^{-+}_{2}(1)$, is a singular double surface union of an octahedron $\mathcal{O}$ embedded in $\mathcal{C}\#_{i=1}^{8}\mathcal{W}_{i}$ with six   conic singular points. Each singular point  at the vertices of an octahedron joints the center of each stellated cube's faces.
\item $1<k<4$, 
$Q^{-+}_{2}(k)=\mathcal{O}\#(\mathcal{C}\#_{i=1}^{8}\mathcal{W}_{i}) $, the connected sum of an octahedron $\mathcal{O}$ with the connected sum of eight half-cylinders $\#_{i=1}^{8}\mathcal{W}_{i}$.
\item $k=4$, 
$Q^{-+}_{2}(4)$, Kummer's surface with twelve conic singularities at the center of the edges of a cube. The surface consists of eight closed half-cylinders, the axis of which are the diagonals of the cube. Each half-cylinder touches the three nearest neighboring half-cylinders at singular points.
\end{itemize}

\vspace{3pt}
\item $3< \beta<4$,
\begin{itemize}
\item $k<0$, 
$Q^{-+}_{3}(k)=\mathcal{C}{\#}_{i-1}^{8}\mathcal{K}_{i}$, cube stellated by cones, connected sum of eight cones $\mathcal{K}_{i}$ with axes being the diagonal of a cube and the cube $\mathcal{C}$
\item $k=0$,  
the quartic $Q^{-+}_{3}(0)=(\mathcal{C}\#_{i-1}^{8}\mathcal{K}_{i})\sqcup \{0\}$,  disjoint union of a stellated cube  $\mathcal{C}\#_{i=1}^{8}\mathcal{K}_{i}$ and the singular point $\{0\}$.
\item $0<k<1$, the quartic $Q^{-+}_{3}(k)=(\mathcal{C}\#_{i=1}^{8}\mathcal{K}_{i})\sqcup\mathcal{O}$. 
\item $k=1$, $Q^{-+}_{3}(1)=(\mathcal{C}\#_{i=1}^{8}\mathcal{K}_{i})\cup\mathcal{O}$, singular  with six conic singularities at the vertices of the octahedron and center of faces of cube.
 \item $1<k<\frac{4}{4-\beta}$, 
$Q^{-+}_{3}(k)= (\mathcal{C}\#_{i=1}^{8}\mathcal{K}_{i})\# \mathcal{O}$, muticonnected surface with six holes.
\item $k=\frac{4}{4-\beta}$, 
$Q^{-+}_{3}(\frac{4}{4-\beta})$, Kummer's surface with twelve singular conic points in the center of edges of a cube.
\item $k>\frac{4}{4-\beta}$, 
$Q^{-+}_{3}(k)=\sqcup_{i=1}^{6} \mathcal{T}_{i}$, eight disjoint hyperbolic sheets with section of triangular type $\mathcal{T}_{i}$ and axis being the diagonals of a cube. 
\end{itemize}

\vspace{3pt}
\item $ \beta=4$,
\begin{itemize}
\item $k<0$, $Q^{-+}_{4}(k)=\sqcup_{i=1}^{6} \mathcal{S}q_{i}$, six disjoint hyperbolic sheets with section of square's type $ \mathcal{S}q_{i}$, and axes being the coordinate axes{\ss}\dag.
 \item $k=0$, $Q^{-+}_{4}(0) =\sqcup_{i=1}^{6} \mathcal{S}q_{i}\sqcup \{0\}$,
 \item $0<k<1$,  $Q^{-+}_{4}(k)= \sqcup_{i=1}^{6} \mathcal{S}q_{i}\sqcup \mathcal{O}$ is the disjoint union of six hyperbolic sheets $\mathcal{S}q_{i}$  and an octahedron centered at the origin $\{0\}$. 
 \item $k=1$, $Q^{-+}_{4}(1)$, singular connected surface with six conic singularities at the vertices of an octahedron and the hyperbolic sheets $\mathcal{S}q_{i}$ with axes being the coordinate axes.
\item $k>1$, $Q^{-+}_{4}(k)=\mathcal{O}\#_{i=1}^{6} \mathcal{S}q_{i}$, connected surface with six holes.
\end{itemize}
 
\vspace{3pt}
\item $ \beta\geq 4$,
\begin{itemize}
\item $k<0$,  $Q^{-+}_{5}(k)=\sqcup_{i=1}^{6} \mathcal{S}q_{i}$
 \item $k=0$, $Q^{-+}_{5}(0)=\sqcup_{i=1}^{6} \mathcal{S}q_{i}\sqcup\{0\}$.
 \item $0<k<1$, $Q^{-+}_{5}(k)=\sqcup_{i=1}^{6} \mathcal{S}q_{i}\sqcup\mathcal{O}$.
 \item $k=1$, $Q^{-+}_{5}(k)$ singular connected surface with six conic singularities at the vertices of an octahedron and the hyperbolic sheets $\mathcal{S}q_{i}$ with axes being the coordinate axes.
 \item $k>1$, $Q^{-+}_{5}=\mathcal{O}\#_{i=1}^{6}\mathcal{S}q_{i}$.
\end{itemize}
\end{enumerate}

\end{enumerate}
\end{theorem}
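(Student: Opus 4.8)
The plan is to reduce the whole two-parameter family to the study of a single \emph{quadric} together with its position relative to the first octant, exactly as in the proofs of Theorems~\ref{T:10CD}, \ref{T:1B0D} and~\ref{T:1bCD}. Put $X=x^{2}$, $Y=y^{2}$, $Z=z^{2}$, set $u=X+Y+Z$ and $v=XY+YZ+ZX$, and let $\sigma\colon(x,y,z)\mapsto(X,Y,Z)$; the map $\sigma$ sends $\mathbb{R}^{3}$ onto the closed octant $\mathcal{O}^{+}=\{X,Y,Z\ge 0\}$, being $8$-to-$1$ over its interior, $4$-to-$1$ over the open faces, $2$-to-$1$ over the open axes and $1$-to-$1$ at the origin, and ramifies exactly along the three coordinate planes. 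Each of the two equations $f_{-,-}=0$, $f_{-,+}=0$ becomes $\beta v=u^{2}\pm u+\tfrac{k}{4}$, i.e.\ the trace on $\mathcal{O}^{+}$ of a quadric surface $\mathcal{Q}_{\beta,k}$; thus the quartic is $\sigma^{-1}\!\bigl(\mathcal{Q}_{\beta,k}\cap\mathcal{O}^{+}\bigr)$, and its topology is governed by (i) the projective type of $\mathcal{Q}_{\beta,k}$, (ii) the way $\mathcal{Q}_{\beta,k}$ meets the faces, edges and interior diagonal ray of $\mathcal{O}^{+}$, and (iii) the ramification of $\sigma$ along that incidence.

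First I would classify $\mathcal{Q}_{\beta,k}$ by means of the quadric tables of Bromwich~\cite{Bro1905} and Buringthon~\cite{Bur1932}. Its quadratic part $u^{2}-\beta v=X^{2}+Y^{2}+Z^{2}+(2-\beta)(XY+YZ+ZX)$ has eigenvalue $3-\beta$ along $(1,1,1)$ and eigenvalue $\beta/2$, twice, on the orthogonal plane, which forces the trichotomy $0<\beta<3$ / $\beta=3$ / $\beta>3$ of the statement (positive definite, hence ellipsoid or point; rank two, hence elliptic paraboloid; signature $(2,1)$, hence hyperboloid or cone); restricting the same form to a coordinate plane $\{Z=0\}$ gives the edge-direction eigenvalue $2-\tfrac{\beta}{2}$, whose sign change at $\beta=4$ accounts for the further split there, i.e.\ for whether the unbounded sheets run along the four diagonals or along the three coordinate axes. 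The centre of $\mathcal{Q}_{\beta,k}$ lies on the diagonal at $X=Y=Z=\pm\tfrac{1}{2(3-\beta)}$. Next I would locate the breakpoints in $k$ by substituting the relevant symmetric points into $\mathcal{Q}_{\beta,k}$ and into $\nabla\mathcal{Q}_{\beta,k}$: $k=\tfrac{3}{3-\beta}$ is where the bordered $4\times4$ determinant vanishes, i.e.\ $\mathcal{Q}_{\beta,k}$ degenerates with vertex on the positive diagonal --- eight conical points at the cube vertices $(\pm a,\pm a,\pm a)$ with $a^{2}=\tfrac{1}{2|3-\beta|}$; $k=\tfrac{4}{4-\beta}$ is where $\mathcal{Q}_{\beta,k}$ is tangent to a coordinate plane at a point of an edge line $\{X=Y,\,Z=0\}$ --- twelve conical points at the edge midpoints; $k=0$ is where $\mathcal{Q}_{\beta,k}$ passes through the origin (a point where $\sigma$ is injective and the local model is definite) --- the isolated singular point $\{0\}$; and, for the family $f_{-,+}$ only, $k=1$ is where the two points of $\mathcal{Q}_{\beta,k}$ on a coordinate axis collide at $X=\tfrac12$ --- six conical points at the octahedron vertices. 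These are precisely the values $0,1,\tfrac{3}{3-\beta},\tfrac{4}{4-\beta}$ dividing the $k$-ranges in the statement, and the conditions $\beta=3,4$ record when a given stratum actually enters $\mathcal{O}^{+}$.

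For each of the finitely many cells that the curves $k=0,1,\tfrac{3}{3-\beta},\tfrac{4}{4-\beta}$ and the lines $\beta=3,4$ cut out of the $(\beta,k)$-plane I would then read off the surface by pulling $\mathcal{Q}_{\beta,k}\cap\mathcal{O}^{+}$ back through $\sigma$. A part of $\mathcal{Q}_{\beta,k}$ lying in the \emph{open} octant lifts to a compact component --- a cuboid when it surrounds the centre, a spherical cube or a smooth octahedron when it hugs a vertex; a part meeting a face or an axis lifts, across the double cover furnished by $\sigma$, to the unbounded pieces named in the statement (hyperbolic sheets $\mathcal{H}_{i}$, cones $\mathcal{K}_{i}$, half-cylinders $\mathcal{W}_{i}$, square or triangular sheets $\mathcal{S}q_{i},\mathcal{T}_{i}$), one per incident octant; and where a lifted part straddles a ramification stratum the two sheets of $\sigma$ get glued there, producing the connected sums $\mathcal{C}\#\bigl(\#_{i}\mathcal{W}_{i}\bigr)$, $\mathcal{O}\#\bigl(\#_{i}\mathcal{K}_{i}\bigr)$ and the new double surfaces with six or eight holes. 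The multiplicities $8$, $6$, $12$ are just the numbers of octants, half-axes and edge lines incident to the diagonal, a coordinate axis and an edge line respectively, and the $O_{h}$-symmetry --- already used in the Introduction to restrict to the tetrahedron $\{x\ge0,\ 0\le y\le x,\ 0\le z\le x\}$ --- makes the global surface an equivariant assembly of the one-tetrahedron picture.

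The main obstacle is this last step: resolving the topology of the lift across the ramification locus in the mixed ranges, where $\mathcal{Q}_{\beta,k}$ crosses several coordinate planes at once. There one must check that the outcome is \emph{exactly} the claimed connected sum --- that excising the discs centred at the ramification points of $\mathcal{Q}_{\beta,k}\cap\mathcal{O}^{+}$ and gluing parallel boundaries yields precisely the stated number of holes, with no spurious extra components and no extra singularities, and that the gluing is $O_{h}$-equivariant. By contrast, the quadric classification is a direct application of the Bromwich--Buringthon tables, and locating the breakpoints reduces to bordered-determinant conditions and to evaluating $\mathcal{Q}_{\beta,k}$ and $\nabla\mathcal{Q}_{\beta,k}$ at finitely many symmetric points, so those steps are routine by comparison.
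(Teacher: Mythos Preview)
Your proposal is correct and follows essentially the same route as the paper: reduce to a quadric via $X=x^{2}$, $Y=y^{2}$, $Z=z^{2}$, classify it with the Bromwich--Buringthon tables, locate its position relative to the first octant (intersections with the diagonal ray, the coordinate faces, and the edge lines), and lift back through the $O_{h}$-symmetry. The paper's own proof of this theorem is in fact sketchier than yours --- it explicitly declines to give details beyond two representative lemmas (Lemmas~\ref{L:f--} and~\ref{L:f-+}) --- so your more systematic description of the ramification of $\sigma$ and your uniform derivation of the breakpoints $k=0,1,\tfrac{3}{3-\beta},\tfrac{4}{4-\beta}$ as incidence/tangency conditions is, if anything, a cleaner rendering of the same argument.
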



\section{Proof of the theorems }


\subsection{Method of proof}
\vspace{3pt}
 
Consider the equation of the octahedral quartic~\eqref{E:invquartic}:
\[f(x,y,z)= A(x^{2}y^{2}+y^{2}z^{2}+z^{2}x^{2})+B(x^{2}+y^{2}+z^{2})^{2}+C(x^{2}+y^{2}+z^{2}) +D=0.\]
Using a change of variables $x^2=X,\ Y= y^2,\ Z=z^2$, the equation~\eqref{E:invquartic} is replaced by the quadric equation : 
\begin{equation}\label{E:quarticequiv}
F(X,Y,Z)= A(XY+YZ+XZ)+B(X+Y+Z)^{2}+C(X+Y+Z)+D=0,
\end{equation}
defined on the new metric space: \[dX=dx^2, dY=dy^2 , dZ=dz^2,\  \ \hspace{5pt}  \\ ds^2= \frac{dX^2}{4X}+\frac{dY^2}{4Y} +\frac{dZ^2}{4Z}\  \hspace{5pt} .  \] 
 with metric tensors defined by:
\[
\left( \begin{array}{ccc} dX& dY&dZ \end{array} \right)
\left( \begin{array}{ccc}
 \frac{1}{4 X} & 0 &0  \\ 
 0 & \frac{1}{4 Y} & 0 \\
0 & 0 & \frac{1}{4 Z} 
\end{array} 
\right) \left( \begin{array}{c} dX \\ dY \\ dZ \end{array} 
\right)
\mbox{~and~} \ \hspace{3pt} g=\det\left( \begin{array}{ccc}
 \frac{1}{4 X} & 0 &0  \\ 
 0 & \frac{1}{4 Y} & 0 \\
0 & 0 & \frac{1}{4 Z} 
\end{array}  \right)=\frac{1}{64XYZ} >0. \]

This change of variable enables us to use Bromwich-Buringthon's classification~\cite{Bro1905,Bur1932,MeYa} of quadrics in three affine space~(see also appendix~\ref{A:BrBu}).
\vspace{3pt}

Note that the degree two surface ~\eqref{E:quarticequiv} has to be  defined in the so-called first octant: $ \{X\geq0 ,Y\geq 0 ,Z \geq 0 \}$ and that it's topological properties, in the first octant, will stay unchanged after using the inverse transformation map to the standard euclidean metric.

\vspace{3pt}
 The corresponding quartic surface will be obtained from the inverse transformation, limited to the first octant, and from the subgroup of $O_{h}$ , generated by the reflections about the coordinate planes.  However, to characterize the quadric~\eqref{E:quarticequiv} it  is sufficient to know it's properties in an even smaller domain than previously:  Let us consider the plane arrangement $X=0,Y=0,Z=0,X=Y,X=Z,Y=Z$ of $\mathbb{R}^{3}$. This arrangement cuts space into 48 unbounded congruent cones, whose intersection  with the unit sphere furnishes a tessellation (2,3,4) of the sphere~\cite{Rat}. Note that every fundamental domain of this tessellation is a spherical triangle with angles $(\frac{\pi}{2},\frac{\pi}{3},\frac{\pi}{4})$. The  quartic associated to the quadric~\eqref{E:quarticequiv} is obtained by the inverse transformation, limited to one of the six cones of the first octant, for example $\{X\geq 0,\ 0\leq Z \leq X,\ 0 \leq Z\leq Y\}$, and use of the $O_{h}$ group. 
 \begin{definition}
 We will call a fundamental domain the region delimited by the planes $Z=0,\ X=Y,\\ Z=X,\ $ in the first octant, which is one of the 48 regions of the plane arrangement.
 \end{definition}

To study the quadric~\eqref{E:quarticequiv} in terms of Bromwich-Buringthon's classification it is convenient to rewrite the polynomial  as a matrix product: 
 \begin{equation}\label{E:Mquarticequiv}
 \mathbf{ X}^{T}\Lambda\ \mathbf{ X}=0,\quad \mathbf{ X}=(1,X,Y,Z)^{T}, \end{equation}
 with
\[ \Lambda=\left(\renewcommand{\arraystretch}{1.5}\begin{tabular}{c|c}
$D$ &$ \frac{1}{2}\mathbf{C}^{T}$\\ \hline
$\frac{1}{2}\mathbf{C}$ & $\Lambda_{0}$  
\end{tabular}\right),
 \quad \mathbf{C}=(C,C,C)^{T},\ \Lambda_{0}= \begin{pmatrix}
B & W  & W\\ 
W & B & W\\
W & W & B  \\  
\end{pmatrix},\quad W=\frac{A+2B}{2}.
 \]
 
 We have
 \begin{equation}
\det \Lambda = (B-W)^{2}\left((B+2W)D-\frac{3}{4}C^{2} \right)   ,\quad \det \Lambda_{0}=(B-W)^{2}(B+2W).
 \end{equation}
 
 The eigenvalues of $\Lambda_{0}$ are:
  \begin{equation}
  \lambda_{1}=\lambda_{2}=B-W=-\frac{A}{2}, \ \lambda_{3}=B+2W= A+3B\}.
  \end{equation}
 From the normalized  eigenvectors $\{\mathbf{v}_{1},\mathbf{v}_{2},\mathbf{v}_{3}\}$, one can obtain the columns of  the  transition matrix $P$ : 
\[ P=\left( \begin{array}{ccc}
-\frac{1}{\sqrt2} & -\frac{1}{\sqrt2}  & \frac{1}{\sqrt3}\\ 
0 & \frac{1}{\sqrt2} & \frac{1}{\sqrt3}\\
\frac{1}{\sqrt2} & 0 & \frac{1}{\sqrt3}  \\  
\end{array}\right).\]
\begin{remark}
  The eigen-plane of the double eigenvalues  $\lambda_{1}= \lambda_{2}=-\frac{A}{2}$ is orthogonal to the axis  which corresponds  to simple eigenvalue $\lambda_{3}=A+3B$, so the quartic has this axis as revolution axis. 
 \end{remark}

To determine the  the center $P_{0 }$ of the quadric, let us rewrite equation~\eqref{E:Mquarticequiv}
\[\ \mathbf{ X}_{0}^{T}\Lambda_{0}\ \mathbf{ X}_{0}+\mathbf{C}^{T}\mathbf{ X}_{0} +D=0,\quad \mathbf{ X}_{0}^{T}=(X,Y,Z).\] 
Let $\mathbf{X}=P\mathbf{X'}$, then 
\[\begin{aligned}\mathbf{X'}\cdot\ \text{diag}(\lambda_{1},\lambda_{2},\lambda_{3}) \cdot \mathbf{X'}^{T}+ \mathbf{C}^{T} \cdot P \cdot \mathbf{X'} +D&=0\\
 (B-W)({X'}^{2} +{Y'}^{2})+(B+2W)\left ({Z'} +\frac{C\sqrt{3}}{2(B+2W)}\right)^{2} &= \frac{3C^{2}}{4(B+2W)^{2}}-D, \end{aligned}\]
Then the center of the quadric in the $\mathbf{X}'$ coordinates lies on the $\mathbf{v}_{3}$ axis at  the point $ X'=Y'=0,\ Z'= -\frac{C \sqrt{3}}{2(B+2W)} $, and in $\{\mathbf X\}$ coordinate  is
$\left(-\frac{C}{2(B+2W)},-\frac{C}{2(B+2W)},-\frac{C}{2(B+2W)}\right) $,
\begin{equation}
P_{0}=-\frac{C \sqrt{3}}{2(B+2W)}\mathbf{v}_{3}.
\end{equation}


\subsection{Proof of theorem~\ref{T:01CD}}
As we have seen before when $A=0$ we can choose $B=1$, and $C=-1,0$ or 1, then
\[ \Lambda=\left(\renewcommand{\arraystretch}{1.5}\begin{tabular}{c|c}
$D$ &$ \frac{1}{2}\mathbf{C}^{T}$\\ \hline
$\frac{1}{2}\mathbf{C}$ & $\Lambda_{0}$  
\end{tabular}\right),
 \quad \mathbf{C}=(C,C,C)^{T},\ \Lambda_{0}= \begin{pmatrix}
1 & 1 & 1\\ 
1 & 1 &1\\
1 & 1 & 1  \\  
\end{pmatrix}.
 \]
 then $\text{rk}(\Lambda)=2$ and $\det \Lambda_{0}=0$. By Bromwich-Burighton's classification,  this quadric belongs to the case of parallel planes. The position of the planes with respect to the first octant can be given by the number of intersections with the half-line $X=Y=Z>0$, that is the number of positive roots of the equation
\begin{equation} \label{E: Intdigsurf} 3X^2+3CX+D=0.\end{equation} 
A direct discussion shows the domain of  D where there is 0,1 or 2 intersections. This number of intersections depends on $C=-1,0,1$. By inverse transformation and symmetry we obtain theorem~\ref{T:01CD}.\qed


\subsection{Proof of theorem~\ref{T:10CD}}
In the case $A\ne 0$ and $B=0$ we can choose $A=1$ and $C=+1,0$ or -1, then 
\[ \Lambda=\left(\renewcommand{\arraystretch}{1.5}\begin{tabular}{c|c}
$D$ &$ \frac{1}{2}\mathbf{C}^{T}$\\ \hline
$\frac{1}{2}\mathbf{C}$ & $\Lambda_{0}$  
\end{tabular}\right),
 \quad \mathbf{C}=(C,C,C)^{T},\ \Lambda_{0}= \begin{pmatrix}
0 & \frac{1}{2} & \frac{1}{2}\\ 
\frac{1}{2} & 0&\frac{1}{2}\\
\frac{1}{2} & \frac{1}{2}& 0  \\  
\end{pmatrix}.
 \]
 
we have 
\[ \det \Lambda = \frac{4D-3C^{2}}{16},\ \text{rk}(\Lambda)=4, \det \Lambda_{0}=\frac{1}{4} .\]
The eigenvalues of $\Lambda_{0}$ are $\{ -\frac{1}{2}, -\frac{1}{2},1\}$. By Bromwich-Burighton's classification,  this quadric belongs to the hyperboloid class ($\det \Lambda_{0}\ne0,\  \text{rk}(\Lambda)=4$) and
\begin{itemize}
\item if $\det \Lambda >0 \iff \frac{4}{3}D>C^{2}$ the hyperboloid is one-sheeted,
\item if $\det \Lambda <0 \iff \frac{4}{3}D<C^{2}$ the hyperboloid is two-sheeted. The position of the hyperbolic sheets with respect to the first octant will be given, as previously, by the number of positive roots of the equation: 
$ 3X^2+3CX+D=0$.

\item if  $\det \Lambda =0 \iff \frac{4}{3}D=C^{2}$ the surface is singular.
\end{itemize}

Let us now discuss,  in function of $D$, the nature of the quadric for each value of $C =-1,0,1$ and deduce the corresponding quartics
\begin{enumerate}
\item $C=-1$. 
\begin{itemize} 
\item  $D<0$, the quadric is a two-sheeted hyperboloid and equation~\eqref{E: Intdigsurf} has only one positive root. Moreover,  the intersection of the hyperboloid with the plane $X=a$, in the fundamental domaine $\{X\geq 0,0\leq Y\leq X,0\leq Z\leq X\}$ , tends to the line $Z+Y-1=0$ when $a\to \infty$. Then quartic is of the type: circular-hemihexacron, the section of which by a plane orthogonal to an axis tends to be a unit circle centered on the axis.
\item  $D=0$.  The quadric is a two-sheeted hyperboloid and equation~\eqref{E: Intdigsurf} has one positive roots and one equal to zero. Topologically, the quartic  is the disjoint union of a circular-hemihexacron with the singular point $\{0\}$.
\item  $0<D<\frac{3}{4}$.  The quadric is a two-sheeted hyperboloid and equation~\eqref{E: Intdigsurf} has two positive roots. The quartic is the disjoint union of a topological sphere nested in  a circular-hemihexacron. 

\item  $D=\frac{3}{4}$. Equation~\eqref{E: Intdigsurf} has a double root $X=\frac{1}{2}$, but any straight line  in the fundamental domain intersect the quadric in two points. The quadric is one-sheeted with a singular point $X=Y=Z=\frac{1}{2}$, then the quartic, restricted to the first octant has a singular point $x=y=z=\frac{1}{\sqrt {2}}$. Finally, because of the symmetry the quartic $Q_{1,0,-1} (\frac{3}{4})$ has 8 singular points, at the vertices of a cube centered at the origin. Moreover, it is two-sheeted except at the singular points. 
inscribed in the sphere of radius $\sqrt\frac{3}{2}$ with faces parallel to the coordinate planes. 
\item  $\frac{3}{4}<D<1$. The quadric is a one-sheeted hyperboloid with axis $X=Y=Z$ and center at a point $X_{0}=Y_{0}=Z_{0}=\frac{1}{2}$. By inverse transformation and symmetries the quartic surface is the connected sum  on each vertex of a cube and a circular-hemihexacron, then the surface presents 8 holes.
\item $D=1$. The quadric is a one-sheeted hyperboloid with axis the oriented line $X=Y=Z$ and center at $X_{0}=Y_{0}=Z_{0}=\frac{1}{2}$. The quartic has 12 singular points.
\item $D>1$. The quadric is a one-sheeted hyperboloid with axis $X=Y=Z$ and center at $X_{0}=Y_{0}=Z_{0}<0$. Its intersection with the first octant is made of 3 sheets. Then coming back to the quartic we obtain 6 smooth disjoint surfaces, more precisely 3 two-sheeted surfaces, having  an coordinate axis as symmetry axis.

\end{itemize}

\vspace{3pt}
\item $C=0$
\begin{itemize}  
\item $D<0$ , $\det \Lambda<0$ the hyperboloid is two sheeted, but only one sheet intersects the first octant. Finally the quartic  is an unbounded six branches star, branches being asymptotic to the coordinates axis.
\item $D=0$ The quadric is singular and reduced to the coordinates axis.
\end{itemize}

\vspace{3pt}
\item $C=1$.
\begin{itemize}  
\item $D<0$ , $\det \Lambda<0$ the hyperboloid is two sheeted, but only one sheet intersects the first octant. Finally the quartic  is an unbounded six branches star, branches being asymptotic to the coordinates axis.
\item $D=0$ The quadric is singular at zero, by calculation. 
The surface is the disjoint union of a connected component centered at the origin with six cylindrical branches tending to infinity.
\item $0<D<\frac{4}{3}$ implies that the hyperboloid is two sheeted. Moreover, both sheets exist in the fundamental domain. Therefore, the surface has two connected concentric components centered at zero. 
\end{itemize}

\end{enumerate}\qed


\subsection{Proof of theorems~\ref{T:1B0D} }
In the case $A\ne 0$, $B\ne0$ and $C=0$ we can choose $A=1$ and $B=-b$ or $b$, $b>0$, then 
\[ \Lambda=\left(\renewcommand{\arraystretch}{1.5}\begin{tabular}{c|c}
$D$ &$ \mathbf{0}^{T}$\\ \hline
$\mathbf{0}$ & $\Lambda_{0}$  
\end{tabular}\right),
\ \Lambda_{0}= \begin{pmatrix}
B & W& W\\ 
W& B&W\\
W & W& B  \\  
\end{pmatrix},\ W=\frac{1+2B}{2}.
 \]
 
\[
\det \Lambda = D\frac{1+3B}{4},\ \det \Lambda _{0}= \frac{1+3B}{4}=\begin{cases}> 0 &\text{ if } B> -\frac{1}{3}\\ 0 &\text{ if } B= -\frac{1}{3}\\< 0 &\text{ if } B< -\frac{1}{3}\end{cases},\ \text{rk}(\Lambda)=4
\]

The position of the quadric sheets with respect to the first octant will be given, as previously, by the number of positive roots of the equation: 
$ 3(1+B)X^2+D=0$.
\begin{itemize}

\item If $B<-\frac{1}{3}$, 
\begin{itemize}
\item $D> 0$ then $\det \Lambda \ne0$ and $\text{rk}(\Lambda)=4$,  the quadric is a real ellipsoid. The quadric is a cuboid (topological sphere).
\item $D=0$, then $\det \Lambda \ne0$ but $\det \Lambda_{0}\ne0$ and $\text{rk}(\Lambda)=3$, the quadric and the the quartic degenerate in $\{0\}$.
\end{itemize}

\item If $B=-\frac{1}{3}$,then $\det \Lambda_{0}=0,\  \det \Lambda=0$ 
\begin{itemize}
\item if $D>0$, $\text{rk}(\Lambda)=3$, we have a improper singular point at infinity so the quadric is a cylinder. The axe of this  cylinder is the line $X=Y=Z$. Coming back to the quartic  we obtain a stellated cube. 
\item if $D=0$, $\text{rk}(\Lambda)=2$, the quadric degenerate in diagonal of cube.
\item if $D<0$, the quartic does not exist in real affine space.
\end{itemize}

\item If $-\frac{1}{3}<B<0$,
 \begin{itemize}
 \item $D< 0$ then $\det \Lambda \ne 0$ and $\text{rk}(\Lambda)=4$, the quadric is two-sheeted with center at zero and with vertex situated on the straight line $\{x=y=z\}$.
Applying the previous arguments of symmetry and change of coordinates the quadric is hence composed of eight hyperbolic sheets atthe vertices of a cube. 

\item $ D> 0$ then $\det \Lambda \ne 0$ and $\text{rk}(\Lambda)=4$, the quadric is one-sheeted hyperboloid, centered in the origin. The intersection with the fundamental domain provides one smooth sheet which does not contain the origin. Finally, the quartic is disjoint sum of six smooth cones with axis being the coordinates axis.

\item $D=0$, then $\det \Lambda \ne0$ but $\det \Lambda_{0}\ne0$ and $\text{rk}(\Lambda)=3$, the quadric and the quartic give the point $\{0\}$.
\end{itemize}

\item If $B>0$, the angle between the asymptotes of the hyperbolic sheets with vertices placed on $\{x=y=z\}$ is strictly superior to $\frac{\pi}{2}$. By this argument the sheet intersects transversally the fundamental domain, separating it into two disjoint sets.
  Using the same arguments as before the quartic is a compact topological sphere. 
\end{itemize}


\subsection{Proof of theorem~\ref{T:1bCD} }
We have  to study the surface defined by equation~\eqref{E:++} and equation~\eqref{E:+-} the corresponding quadric being
\[ F_{++}=\mathbf{ X}^{T}\Lambda^{++}\ \mathbf{ X}=0,\quad
F_{+-}=\mathbf{ X}^{T}\Lambda^{+-}\ \mathbf{ X}=0,
\]
where
\[\Lambda^{++}=\left(\renewcommand{\arraystretch}{1.5}\begin{tabular}{c|c}
$\frac{k}{4}$ &$ \frac{1}{2}\mathbf{I}^{T}$\\ \hline
$\frac{1}{2}\mathbf{I}$ & $\Lambda^{+}_{0}$  
\end{tabular}\right), \Lambda^{+-}=\left(\renewcommand{\arraystretch}{1.5}\begin{tabular}{c|c}
$\frac{k}{4}$ &$ -\frac{1}{2}\mathbf{I}^{T}$\\ \hline
$-\frac{1}{2}\mathbf{I}$ & $\Lambda^{+}_{0}$  
\end{tabular}\right),
 \quad \mathbf{I}=(1,1,1)^{T},\]
 \[ \Lambda^{+}_{0}= \begin{pmatrix}
1 & W  & W\\ 
W & 1 & W\\
W & W & 1  \\  
\end{pmatrix},\quad W=\frac{\beta+2}{2},\ \beta>0,\quad \det \Lambda^{-}_{0}=\frac{\beta^{2}}{4}(\beta+3).
 \]
\[\det \Lambda_{++}=\det \Lambda_{+-}=\frac{\beta^2}{16}(k(\beta+3)-3) \]
The eigenvalues of $\Lambda_{0}^{+} $ are $\{\lambda_{1}^{+}=\lambda_{2}^{+}=-\frac{\beta}{2}, \lambda_{3}^{+}=\beta+3\}$, so $\sigma^{-}_{0}=2,\ \sigma^{+}_{0}=1$. Then the quadric is of hyperboloid type, more precisely
\begin{itemize}
\item One-sheeted hyperboloid if $k>\frac{3}{\beta+3}$ ($\det  \Lambda^{++}=\det  \Lambda^{+-}>0$)  
\item Two-sheeted hyperboloid if  $k<\frac{3}{\beta+3}$ ($ \det  \Lambda^{++}=\det  \Lambda^{+-}<0$)
\end{itemize}

 Moreover the center of the quadric
\begin{itemize} 
\item $F_{++}$ in the coordinate axis is $(-\frac{\sqrt{3}}{2(\beta+3)},-\frac{\sqrt{3}}{2(\beta+3)},-\frac{\sqrt{3}}{2(\beta+3)})$, \item $F_{+-}$ in the coordinate axis is $(\, \frac{\sqrt{3}}{2(\beta+3)},\, \frac{\sqrt{3}}{2(\beta+3)},\,  \frac{\sqrt{3}}{2(\beta+3)})$.
\end{itemize}

As we have illustrated the method in  previous theorems we limit ourself to some proposition enlightening the typical structure of quartic surfaces

\subsubsection{Some properties of quartic surface $Q^{++}(k)$}

\begin{lemma}\label{L:f++}
The octahedral quartic  $Q^{++}(k)=\{ f_{++}=0\}$, is a connected compact surface homeomorphic to a sphere if $k<0<\frac{3}{3+\beta}$.
\end{lemma}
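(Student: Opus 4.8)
The plan is to analyze the quartic $Q^{++}(k)$ through its associated quadric $F_{++}$ in the first octant, exactly as set up in the method of proof. First I would record that for $k<0$ we have $k<\tfrac{3}{3+\beta}$, so by the eigenvalue computation already carried out, $F_{++}$ is a two-sheeted hyperboloid of revolution with axis the line $X=Y=Z$ and center $P_0$ at the point with all coordinates equal to $-\tfrac{1}{2(\beta+3)}$, which lies strictly outside the first octant (all coordinates negative). The key geometric claim is then: the branch of this two-sheeted hyperboloid that could meet the closed first octant $\{X\ge 0, Y\ge 0, Z\ge 0\}$ either misses it entirely or meets it in a single topological disc.

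The main step is to locate the sheets relative to the first octant. Following the recipe used in the other proofs, I would intersect the quadric with the half-line $X=Y=Z=t$, $t\ge 0$: substituting into $F_{++}$ gives a quadratic $3(\beta+3)t^2 + 3t + \tfrac{k}{4}=0$ (up to the harmless normalization), whose number of nonnegative roots controls how many vertices of the hyperboloid sit in the octant. For $k<0$ the constant term is negative, so there is exactly one positive root $t_0>0$: precisely one vertex of the hyperboloid lies on the positive diagonal, hence exactly one sheet enters the first octant. Next I would argue that this sheet, being a convex cap of a two-sheeted hyperboloid of revolution with vertex at an interior point of the octant and opening away from $P_0$ (toward increasing $X+Y+Z$), meets the boundary planes $X=0$, $Y=0$, $Z=0$ in a bounded way — in fact, because $k<0$ forces $D=\tfrac{k}{4\beta}\cdot(\text{positive})<0$ wait, more carefully: one checks $F_{++}\big|_{X=0}$ is again a conic in $(Y,Z)$ which, with the negative constant term, bounds a compact region, so the sheet's trace on each coordinate plane is compact. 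Combining these, the portion of the sheet lying in the closed first octant is a compact surface with boundary, and a direct check (the conic sections on the three coordinate planes are arcs, and the three arcs close up) shows it is homeomorphic to a closed disc.

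Finally I would push this back through the construction of the quartic. Restricting to the fundamental domain $\{X\ge 0,\ 0\le Z\le X,\ 0\le Z\le Y\}$ — equivalently the region $\{x\ge 0,\ 0\le y\le x,\ 0\le z\le x\}$ for the quartic — the sheet meets it in a single arc/disc piece with no singular points (since $\det\Lambda^{++}\ne 0$ for $k\ne\tfrac{3}{\beta+3}$, in particular for $k<0$, the quadric is smooth, and the inverse map $X=x^2$ etc.\ is a diffeomorphism on the open first octant, so smoothness is preserved away from the coordinate planes; along the coordinate planes one checks the quartic crosses transversally because the conic sections do). Then applying the subgroup of $O_h$ generated by the reflections in the coordinate planes and the full octahedral symmetry glues these $48$ (respectively $8$) smooth pieces along their common boundary arcs into a single closed surface. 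It remains to identify the homeomorphism type: the surface is compact (the diagonal intersection analysis bounds it inside a ball), connected (the pieces are glued along shared edges in the pattern of the $(2,3,4)$ tessellation, whose nerve is connected), and orientable of genus $0$ — this last point follows either from an Euler characteristic count over the $48$ triangular patches of the tessellation, or more cheaply by observing that the surface bounds the convex-like region $\{f_{++}<0\}$, which is a topological ball, so its boundary is $S^2$.

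\textbf{Main obstacle.}
The delicate point is the behaviour of the single hyperboloid sheet near the coordinate planes: one must verify that the trace of the quartic on each plane $x=0$, $y=0$, $z=0$ is a single smooth closed curve bounding a disc on the surface, and that the $48$ patches assemble without creating extra topology or singularities. This is where the explicit conic-section computations and a careful Euler-characteristic (or "boundary of a ball") argument are needed; the eigenvalue and determinant bookkeeping above is routine by comparison.
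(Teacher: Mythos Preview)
Your proposal is correct and follows the same overall scheme as the paper: pass to the quadric $F_{++}$, use the eigenvalue/determinant analysis to see a two-sheeted hyperboloid for $k<\tfrac{3}{3+\beta}$, intersect with the diagonal $X=Y=Z$ to get $3(\beta+3)t^{2}+3t+\tfrac{k}{4}=0$ and conclude that exactly one sheet enters the first octant when $k<0$, then reconstruct the quartic by symmetry.

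The one substantive difference is how compactness (and hence the topology) is secured. The paper takes a \emph{vertical} plane section of the hyperboloid, obtains a hyperbola, and observes that the angle between its asymptotes exceeds $\tfrac{\pi}{2}$; this single observation forces the relevant sheet to intersect the boundary of the fundamental domain transversally and in a bounded set, so the quartic piece is compact. You instead analyse the traces $F_{++}\vert_{X=0}$ etc.\ directly and then do an explicit gluing/Euler-characteristic (or ``boundary of a ball'') argument to identify the sphere. Your route is longer but is more explicit about why the glued surface is actually $S^{2}$, a step the paper leaves implicit. One caution: the trace $F_{++}\vert_{X=0}$ is a \emph{hyperbola} (its quadratic part $Y^{2}+Z^{2}+(2+\beta)YZ$ is indefinite), so ``bounds a compact region'' is only true after restricting to the quadrant $Y,Z\ge 0$; make that restriction explicit when you write up the details, since that is precisely the point where the asymptote-angle argument in the paper is doing its work.
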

\begin{proof} If  $k<\frac{3}{3+\beta}$,  $F_{++}(k)=0$ is a two-sheeted hyperboloid. 
The intersection of the quadric $F_{++}(k)=0$ with the line $\{X=Y=Z\}$ is the set of roots of the following polynomial: $3(\beta+3) X^{2}+3X+\frac{k}{4}=0$. There is only one positive root. Therefore, the quadric has only one sheet in the fundamental domain. 
We claim that the resulting quartic surface is a compact connected component.  
Indeed, consider in the fundamental domain the intersection of a vertical plane to the hyperboloid and the hyperboloid : 
one obtains a hyperbola. Then, it is easy to compute the angle between the asymptotes and notice that it is greater than $\frac{\pi}{2}$.

If $0<k<\frac{3}{3+\beta}$, the quadric is not defined in the fundamental domain.  Hence, the quartic surface does not exist.
If $k>\frac{3}{3+\beta}$,  $F_{++}(k)=0$ is a one-sheeted hyperboloid.
However, the quartic is the empty set, since the quadric does not intersect the fundamental domain.
\end{proof}
\subsubsection{Some properties of the quadric surface $Q^{+-}(k)$ }

\begin{lemma}\label{L:f+-1}
The octahedral quartic  $Q^{+-}(k)=\{ f_{+-}=0\}$, is a connected compact surface homeomorphic to a sphere if $k<0$.
\end{lemma}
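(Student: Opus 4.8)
The plan is to mirror the structure of the proof of Lemma~\ref{L:f++}, since the quadrics $F_{++}$ and $F_{+-}$ share the same matrix $\Lambda_0^+$, the same eigenvalues, and the same determinant formula; they differ only in the sign of the linear term, hence only in the location of the center. First I would record that for $k<0$ we have $\det\Lambda^{+-}<0$ and, since $\sigma_0^-=2,\ \sigma_0^+=1$, the quadric $F_{+-}(k)=0$ is a two-sheeted hyperboloid with axis of revolution the line $\{X=Y=Z\}$ and center at the point with coordinates $(\tfrac{\sqrt3}{2(\beta+3)},\tfrac{\sqrt3}{2(\beta+3)},\tfrac{\sqrt3}{2(\beta+3)})$, which now lies \emph{inside} the first octant (in contrast to the $F_{++}$ case).

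Next I would intersect the quadric with the diagonal half-line $\{X=Y=Z>0\}$, which amounts to counting the positive roots of $3(\beta+3)X^2-3X+\tfrac{k}{4}=0$. For $k<0$ the product of the roots $\tfrac{k}{4}/(3(\beta+3))$ is negative, so there is exactly one positive root and one negative root; hence exactly one of the two sheets meets the interior of the first octant, and that sheet crosses the diagonal axis transversally at a single point. I would then argue, exactly as in Lemma~\ref{L:f++}, that the sheet cuts the fundamental domain $\{X\ge0,\ 0\le Z\le X,\ 0\le Z\le Y\}$ transversally into a bounded (compact) piece: slicing by a plane orthogonal to the revolution axis gives a hyperbola whose asymptotic angle exceeds $\tfrac{\pi}{2}$ (this is where the eigenvalue ratio $-\beta/2$ versus $\beta+3$, i.e. $\beta>0$, enters), so the sheet turns away from the cone walls and the intersection with the octant is a compact cap rather than an unbounded branch.

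Finally I would pull this back through the inverse substitution $X=x^2,\ Y=y^2,\ Z=z^2$ and act by the subgroup of $O_h$ generated by the reflections in the coordinate planes. Since the octant piece is a single compact disk-like patch meeting each bounding plane $Z=0$, $X=Y$, $X=Z$ in a curve, the eight reflected copies in the first octant glue along these curves, and the further reflections in the coordinate planes assemble the eight octant-copies into one closed surface without boundary; by the usual count this surface is connected and homeomorphic to $S^2$ (the octahedron-like cell structure has the right Euler characteristic, with no singular point since $\det\Lambda^{+-}\ne0$). The main obstacle is the transversality/boundedness claim in the fundamental domain: one must verify that the single sheet does not escape to infinity along the cone $\{X=Z\}$ or $\{X=Y\}$, equivalently that the relevant plane sections are ellipses or bounded hyperbola-arcs, which reduces to checking the signs of the restricted quadratic forms on these two-dimensional walls — a short but genuinely necessary computation that I would carry out explicitly.
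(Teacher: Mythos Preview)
Your proposal is essentially the paper's own argument, which simply says to proceed as in Lemma~\ref{L:f++} with the diagonal equation $3(\beta+3)X^{2}-3X+\tfrac{k}{4}=0$ having exactly one positive root for $k<0$. One slip to fix: the plane section that produces a hyperbola is a plane \emph{containing} the revolution axis (what the paper calls a ``vertical'' section), not one orthogonal to it---sections of a two-sheeted hyperboloid of revolution by planes orthogonal to the axis are circles, a point, or empty.
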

\begin{proof} Proceeding as for lemma~\ref{L:f++}, but with equation $3(\beta+3)X^2-3X+\frac{k}{4}=0$ which has one positive root  for$ k<0$, the quartic is a topological sphere.
\end{proof}
\begin{lemma}\label{L:f+-2}
The octahedral quartic  $Q^{+-}(k)=\{ f_{+-}=0\}$ with $0<k<\frac{3}{3+\beta}$ is the disjoint union of two embedded  compact connected component with octahedral symmetry, each homeomorphic to  sphere .
\end{lemma}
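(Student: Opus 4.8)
The plan is to run the argument of Lemma~\ref{L:f+-1}, but now to extract \emph{two} pieces of the associated quadric instead of one. Write $F_{+-}(k)=\mathbf X^{T}\Lambda^{+-}\mathbf X$ for the quadric obtained from $f_{+-}$ by the substitution $X=x^{2},\,Y=y^{2},\,Z=z^{2}$. For $0<k<\frac{3}{3+\beta}$ one has $\det\Lambda^{+-}=\frac{\beta^{2}}{16}\bigl(k(\beta+3)-3\bigr)<0$, while the eigenvalues of $\Lambda^{+}_{0}$ are $-\tfrac{\beta}{2},-\tfrac{\beta}{2},\beta+3$; hence $F_{+-}(k)=0$ is a two--sheeted hyperboloid of revolution about the line $\{X=Y=Z\}$. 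Intersecting with this axis gives $3(\beta+3)X^{2}-3X+\tfrac{k}{4}=0$, of discriminant $9-3(\beta+3)k>0$, with positive sum $\tfrac{1}{\beta+3}$ and positive product $\tfrac{k}{12(\beta+3)}$ of roots; so it has two distinct positive roots $0<X_{-}<X_{+}$, the vertices of the two sheets $\Sigma_{-},\Sigma_{+}$, both interior to the first octant, so each sheet meets the fundamental domain. Likewise $X^{2}-X+\tfrac{k}{4}=0$ and $(\beta+4)X^{2}-2X+\tfrac{k}{4}=0$, which govern the intersections with the coordinate axis $\{Y=Z=0\}$ and with the edge $\{X=Y,\,Z=0\}$, each have two positive roots (positive discriminant for $0<k<\tfrac{3}{3+\beta}$, and positive sum and product of roots), so each sheet crosses all three edges of the fundamental domain.

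\textbf{Boundedness of each sheet in the first octant (the crux).} As in the proof of Lemma~\ref{L:f++}, cutting the hyperboloid by a plane through its axis yields a hyperbola whose asymptotes make the angle $\theta$ with the axis, $\tan\theta=\sqrt{\tfrac{2(\beta+3)}{\beta}}$; equivalently $\mathbf d^{T}\Lambda^{+}_{0}\mathbf d\ge 0$ exactly for unit vectors $\mathbf d$ making an angle $\le\theta=\arccos\sqrt{\tfrac{\beta}{3(\beta+2)}}$ with the diagonal. Since $\beta+2>\beta$ we get $\theta>\arccos\tfrac{1}{\sqrt3}$, so the closed first octant --- all of whose directions make an angle $\le\arccos\tfrac{1}{\sqrt3}$ with the diagonal --- lies strictly inside the open solid asymptotic cone of the hyperboloid, separated from its boundary by a positive angular gap. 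Writing a point of a sheet as $P_{0}+t\mathbf d$ with $|\mathbf d|=1$ and $P_{0}$ the centre (which lies on the positive diagonal), the quadric equation reads $t^{2}\,\mathbf d^{T}\Lambda^{+}_{0}\mathbf d+(\text{linear in }t)+F_{+-}(P_{0})=0$, forcing $\mathbf d^{T}\Lambda^{+}_{0}\mathbf d\to 0$ as $t\to\infty$, i.e.\ the direction $\mathbf d$ approaches the asymptotic cone; for $t$ large $\mathbf d$ then makes an angle $>\arccos\tfrac{1}{\sqrt3}$ with the diagonal, so one entry of $\mathbf d$ is bounded below by a negative constant and the corresponding coordinate of $P_{0}+t\mathbf d$ becomes negative. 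Hence $\Sigma_{\pm}\cap\{X\ge0,\,Y\ge0,\,Z\ge0\}$ is bounded, and an elementary verification via the graph representation over the eigenplane orthogonal to the axis shows it is a topological disk whose boundary lies on the coordinate planes.

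\textbf{Back to the quartic.} The map $(x,y,z)\mapsto(x^{2},y^{2},z^{2})$ is a homeomorphism of $\{x\ge0,y\ge0,z\ge0\}$ onto $\{X\ge0,Y\ge0,Z\ge0\}$, so the preimage of each bounded disk $\Sigma_{\pm}\cap\{X\ge0,Y\ge0,Z\ge0\}$ is again a disk with boundary on $\{x=0\}\cup\{y=0\}\cup\{z=0\}$; cutting it by the three planes $\{x=y\},\{y=z\},\{x=z\}$, which all pass through the diagonal direction interior to the disk, produces six curvilinear triangles, one in each chamber of the first coordinate octant, each with a vertex on the three symmetry axes bounding that chamber. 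Since $f_{+-}$ is even in each variable the surface is smooth across every coordinate plane at each of its non--singular points, so the $48$ images of one such triangle under $O_{h}$ glue edge to edge into a closed $O_{h}$--invariant surface with $F=48$, $E=72$, $V=6+8+12=26$, hence of Euler characteristic $2$: a topological sphere. Carrying this out for $\Sigma_{-}$ and for $\Sigma_{+}$ yields two disjoint (as $\Sigma_{-}$ and $\Sigma_{+}$ are) compact connected $O_{h}$--invariant surfaces, each homeomorphic to $S^{2}$ --- the one coming from $\Sigma_{-}$, which collapses to $\{0\}$ as $k\to0^{+}$, being the inner ``spherical cube'' nested in the ``smooth octahedron'' coming from $\Sigma_{+}$. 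This is exactly the assertion of the lemma.

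\textbf{Main obstacle.} The only genuinely non--formal point is the middle step: showing that each sheet of the \emph{non--compact} two--sheeted hyperboloid meets the first octant in a \emph{bounded} set. This rests on the strict inclusion of the octant in the open asymptotic cone, i.e.\ on $\arccos\tfrac{1}{\sqrt3}<\theta$, which holds for \emph{every} $\beta>0$ since it reduces to $\beta+2>\beta$; it is precisely this comparison of apertures that turns \emph{both} sheets into compact components, rather than leaving one of them unbounded.
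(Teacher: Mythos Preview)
Your argument is correct and follows the same skeleton as the paper's proof: pass to the quadric $F_{+-}$, identify it as a two--sheeted hyperboloid via $\det\Lambda^{+-}<0$, and show both sheets meet the fundamental domain by checking that $3(\beta+3)X^{2}-3X+\tfrac{k}{4}=0$ has two positive roots for $0<k<\tfrac{3}{3+\beta}$. The paper then simply notes that $(X-\tfrac12)^{2}-\tfrac{1-k}{4}=0$ also has two positive roots and invokes the inverse change of variables and the $O_{h}$--action.

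Where you go further is in actually \emph{justifying} compactness and the sphere topology, which the paper leaves implicit. Your asymptotic--cone comparison --- showing that the closed first octant sits strictly inside the open solid cone of the hyperboloid because $\arccos\tfrac{1}{\sqrt3}<\arccos\sqrt{\tfrac{\beta}{3(\beta+2)}}$ reduces to $\beta<\beta+2$ --- is a clean and correct way to force boundedness of each sheet in the octant; this is the content the paper gestures at in Lemma~\ref{L:f++} with the phrase ``the angle between the asymptotes \dots\ is greater than $\tfrac{\pi}{2}$'' but never spells out here. Likewise your Euler--characteristic count ($F=48$, $E=72$, $V=6+8+12=26$, $\chi=2$) via the $(2,3,4)$ tessellation is a genuine addition: the paper asserts ``homeomorphic to a sphere'' without proof. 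So your route is not different in spirit, but it supplies the two substantive verifications (compactness of each component, genus zero) that the paper's three--line argument omits.
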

\begin{proof}
The  inequality $k<\frac{3}{3+\beta}$ implies $\det \Lambda^{+-}<0$ and the quadric $F_{+-}=0$ is a two-sheeted hyperboloid. As the polynomial equation $3(\beta+3)X^2-3X+\frac{k}{4}=0$ has two positive roots  if $0<k<\frac{3}{3+\beta}$, the two sheets of the hyperboloid intersect the fundamental domain. Moreover $(X-\frac{1}{2})^{2} -\frac{1-k}{4}=0$ has two positive roots for $0<k<\frac{3}{3+\beta}$. Using the inverse change of variables and the symmetry actions, the reconstructed quartic has a topological sphere centered at the origin disjoint from the other connected compact components with octahedral symmetry.
\end{proof}
\begin{lemma}\label{L:f+-3}
The octahedral quartic  $Q^{+-}(k)=\{ f_{+-}=0\}$ is
\begin{enumerate}
\item a connected compact surface if $\frac{3}{3+\beta}<k<\frac{4}{4+\beta}$. Moreover, it is a double surface with eight holes centered on the diagonals of a cube.
\item a singular compact connected surface with twelve singularities at the center of edges of an octahedron if  $k=\frac{4}{4+\beta}$.
\item the disjoint union of six  compact connected  centered at the vertices of an octahedron if $k>\frac{4}{4+\beta}$.
\end{enumerate} 
\end{lemma}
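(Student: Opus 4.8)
\textbf{Proof plan for Lemma~\ref{L:f+-3}.}
The strategy follows the same template as Lemmas~\ref{L:f++}--\ref{L:f+-2}: pass to the quadric $F_{+-}=\mathbf{X}^{T}\Lambda^{+-}\mathbf{X}=0$, analyze its position relative to the fundamental domain $\{X\geq 0,\ 0\leq Z\leq X,\ 0\leq Z\leq Y\}$ via the intersection with the diagonal line $\{X=Y=Z\}$, then reconstruct the quartic by the inverse substitution $X=x^{2},Y=y^{2},Z=z^{2}$ together with the reflection subgroup of $O_{h}$. First I would record that on the whole range $k<\frac{3}{3+\beta}$ the determinant sign gives a two-sheeted hyperboloid, while for $k>\frac{3}{3+\beta}$ it is one-sheeted; since all three cases of the lemma lie in the one-sheeted regime $k>\frac{3}{3+\beta}$ (note $\frac{3}{3+\beta}<\frac{4}{4+\beta}$ for $\beta>0$), the relevant quadric $F_{+-}=0$ is a one-sheeted hyperboloid of revolution about the axis $\{X=Y=Z\}$ with center at the point $\bigl(\frac{\sqrt3}{2(\beta+3)}\bigr)\mathbf{v}_3$, already computed above.

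The heart of the argument is to track how the single sheet of this hyperboloid sits inside the first octant, and in particular how its trace on the boundary walls $Z=0$, $X=Z$, $X=Y$ of the fundamental domain changes as $k$ crosses $\frac{4}{4+\beta}$. For part (1), $\frac{3}{3+\beta}<k<\frac{4}{4+\beta}$: I would show the hyperboloid meets the interior of the fundamental domain in a disk-like patch that separates the domain into two pieces, and that the preimage under $X=x^2$ etc., glued over the eight octants of $\mathbb{R}^3$ by the reflections, assembles into two nested $O_h$-invariant sheets joined through eight necks along the cube diagonals $\{x=y=z,\ \dots\}$ — hence a double surface with eight holes. The key computation is the auxiliary equation cut out on the diagonal and on the walls: one checks (as in Lemma~\ref{L:f+-2}) that $(X-\tfrac12)^2=\tfrac{1-k}{4}$ still has two positive roots, but that now the outer root's sphere has been punctured by the hyperboloid sheet, which is detected by comparing where $3(\beta+3)X^2-3X+\frac{k}{4}=0$ places its roots relative to $\frac12$. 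For part (2), at $k=\frac{4}{4+\beta}$ I would verify by the singularity criterion ($\det\Lambda^{+-}$ together with the gradient of $f_{+-}$ vanishing) that the eight necks pinch; because the critical value $X=Y=Z$ no longer applies, I would instead locate the degeneracy at the points where the hyperboloid is tangent to a wall, i.e. at $(x,y,z)$ with one coordinate zero and the other two equal — the twelve edge-midpoints of the octahedron — and confirm each is a conical (ordinary double) point by inspecting the Hessian of $f_{+-}$ there. For part (3), $k>\frac{4}{4+\beta}$: the hyperboloid has retreated so that it no longer bounds a region containing the octahedron's vertices; the trace on the fundamental domain becomes two arcs isolating a neighborhood of the single wall-vertex on a coordinate axis, and reassembly gives six disjoint compact pieces, one around each vertex $(\pm1,0,0),(0,\pm1,0),(0,0,\pm1)$ of the octahedron. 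Compactness follows because, as in Lemma~\ref{L:f++}, the asymptotic cone of the hyperboloid opens at an angle exceeding $\frac{\pi}{2}$, so each coordinate-axis sheet is cut off transversally by the domain walls.

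The main obstacle I anticipate is \emph{not} the determinant/eigenvalue bookkeeping — that is routine from the matrix $\Lambda^{+-}$ above — but rather making rigorous the passage from ``the quadric meets the fundamental domain in such-and-such a patch'' to the global topological type of the quartic after the branched reassembly across the eight octants. In particular, establishing that the gluing at $k$ slightly below $\frac{4}{4+\beta}$ produces \emph{eight holes} (rather than, say, six or twelve) requires carefully matching each neck of the reconstructed surface to one cube diagonal, and checking that the two nested $O_h$-orbits of the patch are genuinely joined there and nowhere else; this is the step where one must argue that the hyperbolic sheet separates the fundamental domain and that its two boundary arcs lie on the appropriate pair of walls. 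I would handle this by an explicit count: intersect $F_{+-}=0$ with each of the three walls $Z=0$, $X=Z$, $X=Y$, read off the conic obtained, determine its branch in the first octant, and then invoke the $(2,3,4)$ tessellation to propagate the picture, exactly as the paper's \S3.1 method prescribes. The transition values $k=\frac{3}{3+\beta}$ and $k=\frac{4}{4+\beta}$ are precisely the $k$ for which this wall-conic degenerates, which is why they bound the three regimes.
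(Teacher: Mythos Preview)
Your plan follows essentially the same route as the paper: recognize that $k>\frac{3}{3+\beta}$ places the quadric $F_{+-}=0$ in the one-sheeted regime, trace its intersection with the walls of the fundamental domain, detect the transition at $k=\frac{4}{4+\beta}$ as a tangency of the hyperboloid with one of these walls, and reconstruct the quartic by the $O_h$ action. The paper does exactly this, very tersely: it records that the asymptote angle forces the hyperboloid to meet the reflection planes in two disjoint curves for part (1), computes the tangency point $(\frac{1}{\sqrt{4+\beta}},0,\frac{1}{\sqrt{4+\beta}})$ for part (2), and observes that the quadric no longer meets the wall $Y=X$ for part (3). Your proposal to intersect explicitly with each wall and read off the resulting conic is, if anything, more thorough than what the paper actually writes down.

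Two slips to correct in part (2). First, you say ``the eight necks pinch,'' but that is not the transition at $k=\frac{4}{4+\beta}$: the eight tubes along the cube diagonals, born at $k=\frac{3}{3+\beta}$, do \emph{not} collapse here. The genus-seven surface can equally be viewed as six spheres at the octahedron vertices joined by \emph{twelve} tubes along the octahedron edges (same Euler characteristic $-12$), and it is those twelve tubes that pinch---consistent with the twelve singular points you correctly locate a line later. Second, the singularity at $k=\frac{4}{4+\beta}$ is \emph{not} detected by $\det\Lambda^{+-}=0$; that determinant vanishes only at $k=\frac{3}{3+\beta}$. The quadric itself stays smooth, and the quartic singularity comes from the quadric becoming \emph{tangent} to a coordinate wall, which the branched substitution $X=x^{2}$ then converts into a node. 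Your proposed Hessian check on $f_{+-}$ at the edge-midpoints is the right verification; just drop the appeal to $\det\Lambda^{+-}$.
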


\begin{proof}
The  inequality $k>\frac{3}{3+\beta}$ implies $\det \Lambda^{+-}>0$ and the quadric $F_{+-}=0$ is a one-sheeted hyperboloid, with center situated in the fundamental domain. 
Notice that the vertical section of this hyperboloid is a hyperbola, with asymptotes that form an angle smaller than $\frac{\pi}{2}$. This implies that the hyperboloid intersects the reflection planes in two disjoint curves. 
 In conclusion, the quartic appears to be a double surface with eight holes centered on the diagonals of a cube.
As $k=\frac{4}{4+\beta}$, the horizontal section of the hyperboloid, an ellipse, is tangent to the plane $\{z=0\}$ in one point: $(\frac{1}{\sqrt{4+\beta}},0,\frac{1}{\sqrt{4+\beta}})$. 
 The reconstruction of the quartic implies that there are twelve singularities. 
 Hence, the quartic being singular for $k=\frac{4}{4+\beta}$ has twelve singular conic points at the center of edges of an octahedron.
 
\
For $k>\frac{4}{4+\beta}$ there is no intersection of the quadric with the plane $Y=X$, but the quadric intersects the fundamental domain in the neighborhood of the coordinate axis. In particular, the intersection is a compact set. Thus, the quartic is the union of six compact connected surfaces homeomorphic to  spheres placed at the vertices of an octahedron.

\end{proof}


\subsection{Proof of theorem~\ref{T:1-bCD} }
We have  to study the surface defined by equation~\eqref{E:--} and equation~\eqref{E:-+} the corresponding quartic being
\[
F_{--}=\mathbf{ X}^{T}\Lambda^{--}\ \mathbf{ X}=0,\quad F_{-+}=\mathbf{ X}^{T}\Lambda^{-+}\ \mathbf{ X}=0,
\]
where
\[\Lambda^{--}=\left(\renewcommand{\arraystretch}{1.5}\begin{tabular}{c|c}
$-\frac{k}{4}$ &$- \frac{1}{2}\mathbf{I}^{T}$\\ \hline
$-\frac{1}{2}\mathbf{I}$ & $\Lambda^{-}_{0}$  
\end{tabular}\right), \Lambda^{-+}=\left(\renewcommand{\arraystretch}{1.5}\begin{tabular}{c|c}
$-\frac{k}{4}$ &$ \frac{1}{2}\mathbf{I}^{T}$\\ \hline
$\frac{1}{2}\mathbf{I}$ & $\Lambda^{-}_{0}$  
\end{tabular}\right),
 \quad \mathbf{I}=(1,1,1)^{T},\]
 \[ \Lambda^{-}_{0}= \begin{pmatrix}
-1 & W  & W\\ 
W & -1 & W\\
W & W & -1  \\  
\end{pmatrix},\quad W=\frac{\beta-2}{2},\ \beta>0,\quad \det \Lambda^{-}_{0}=\frac{\beta^{2}}{4}(\beta-3).
 \]
\[\det \Lambda^{-+}=\det \Lambda^{--}=-\frac{\beta^2}{16}(k(\beta-3)+3);\quad\det \Lambda^{-+}=\det \Lambda^{--}\begin{cases} <0,& k<\frac{3}{3-\beta}\\ =0,& k=\frac{3}{\beta-3}\\>0,& k>\frac{3}{3-\beta}\end{cases}, \beta\ne0
\]

The eigenvalues of $\Lambda_{0}^{-} $ are $\{\lambda_{1}^{-}=\lambda_{2}^{-}=-\frac{\beta}{2},\  \lambda_{3}^{-}=\beta-3\}$, then

\hspace{1cm} $\sigma^{-}_{0}=3,\ \sigma^{+}_{0}=0$ if $\beta<3$,

\hspace{1cm}  $\sigma^{-}_{0}=2,\ \sigma^{+}_{0}=1$ if $\beta>3$.

It follows that for the quadric $F_{--}$ ad $F_{-+}$ we have the following properties 
\begin{itemize}
\item if $k>\frac{3}{3-\beta}$ and $0<\beta<3$ there is no real solution,
\item if $k<\frac{3}{3-\beta}$ and $0<\beta<3$  then the quadric is an ellipsoid,
\item if $k<\frac{3}{3-\beta}$ and $\beta>3$ then the quadric is a one-sheeted hyperboloid, 
\item if $k>\frac{3}{3-\beta}$ and $\beta>3$ then the quadric is a two-sheeted hyperboloid.
\end{itemize}

The center of the  quadric
\begin{itemize} 
\item $F_{--}$ in the coordinate axis is $(\frac{\sqrt{3}}{2(\beta-3)},\frac{\sqrt{3}}{2(\beta-3)},\frac{\sqrt{3}}{2(\beta-3)})$, \item $F_{-+}$ in the coordinate axis is $(-\frac{\sqrt{3}}{2(\beta-3)},\ -\frac{\sqrt{3}}{2(\beta-3)},\ -\frac{\sqrt{3}}{2(\beta-3)})$.
\end{itemize}

We will not give a detailed proof of theorem 5 since the method of proof is the same as previous theorems. However we give some lemmas which show the connectivity of the surface.
Moreover, we do not analyze in details the intersection set of the quadrics with the fundamental domains. The explicit calculations are omitted since they are not very constructive.

\subsubsection{Some properties of quartic surface $Q^{--}(k)$}

\begin{lemma}\label{L:f--}

Let the octahedral quartic be of type $f_{--}$. 

\begin{itemize}

\item If $k<\frac{3}{3-\beta}$ and $0<\beta<3$ then the surface is compact.
\item if $\beta=3$ and $k<0$ the surface is an eight branched star.
\item $3<\beta<4$ 
\begin{list}{$\triangleright$}{}
\item $k<\frac{3}{3-\beta}$,   $Q^{--}(k)=(\#_{i=1}^{8}\mathcal{H}_{i})\#\mathcal{C}$, $\#$ The surface is the connected sum of a cube with hyperbolic sheets having the diagonal of the cube as axis.
\item $\frac{3}{3-\beta}=k$, $Q^{--}(\frac{3}{3-\beta})$, singular quartic, analogous to Cayley's cubic. The quartic is a cube with cones at each vertex.  This surface has eight conical singular points at the vertices of a cube.

\item $\frac{3}{3-\beta}<k<0$, $Q^{--}(k)=(\sqcup_{i=1}^{8}\mathcal{H}_{i})\sqcup\mathcal{C}$. 
\item $k=0$,  $Q^{--}(k)=(\sqcup_{i=1}^{8}\mathcal{H}_{i})\sqcup\{0\}$.
\item $\frac{3}{3-\beta}<0<k$, $Q^{--}(k)=\sqcup_{i=1}^{8}\mathcal{H}_{i}$ 
\end{list}
\end{itemize}
\end{lemma}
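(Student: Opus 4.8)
The plan is to exploit the quadric reduction established in the Method of proof section: under $X=x^2$, $Y=y^2$, $Z=z^2$ the surface $\{f_{--}=0\}$ becomes the quadric $F_{--}=\mathbf{X}^T\Lambda^{--}\mathbf{X}=0$, whose type is already pinned down by the sign data $\det\Lambda^{--}$, $\det\Lambda^{-}_0=\tfrac{\beta^2}{4}(\beta-3)$, and the signature $(\sigma^-_0,\sigma^+_0)$ computed above. The whole argument then reduces to locating the quadric relative to the fundamental domain $\{X\ge 0,\ 0\le Z\le X,\ 0\le Z\le Y\}$, and in particular counting intersections of the quadric with the diagonal half-line $\{X=Y=Z>0\}$ and with the coordinate/reflection planes; once the picture in one fundamental cone is known, the quartic is reassembled by the inverse transformation and the reflection subgroup of $O_h$, exactly as in the proofs of Theorems~\ref{T:10CD} and~\ref{T:1bCD}.

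First I would treat $0<\beta<3$: here $\Lambda^{-}_0$ is negative definite ($\sigma^-_0=3$), so for $k<\tfrac{3}{3-\beta}$ the quadric $F_{--}=0$ is an ellipsoid and for $k>\tfrac{3}{3-\beta}$ it is empty; intersecting the ellipsoid with the fundamental cone and pushing back through $X=x^2$ etc.\ gives a single compact component, i.e.\ a cuboid, shrinking to $\{0\}$ as $k\to 0$ — this is the statement's first bullet. Next, for $\beta=3$ the determinant $\det\Lambda^{-}_0$ vanishes, $\lambda^-_3=0$, so the quadric degenerates to a parabolic cylinder with axis $\{X=Y=Z\}$; for $k<0$ its trace on the fundamental cone is an unbounded strip, which reconstructs to the eight-branched star (eight copies meeting along the cube diagonals). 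Finally, for $3<\beta<4$ we have $\lambda^-_3=\beta-3>0$, so $\sigma^-_0=2$, $\sigma^+_0=1$: the quadric is a one-sheeted hyperboloid when $k<\tfrac{3}{3-\beta}$ and two-sheeted when $k>\tfrac{3}{3-\beta}$, with the singular (cone) case at $k=\tfrac{3}{3-\beta}$. The key computation here is the number of positive roots of $3(\beta-3)X^2-3X-\tfrac{k}{4}=0$ (the diagonal intersection) together with the sign of the discriminant relating the asymptotic opening angle of the vertical hyperbola sections to $\tfrac{\pi}{2}$: when that angle exceeds $\tfrac{\pi}{2}$ the hyperboloid sheet crosses the reflection planes, gluing the eight hyperbolic sheets $\mathcal{H}_i$ to the inner cuboid $\mathcal{C}$ into a connected sum $(\#_{i=1}^8\mathcal{H}_i)\#\mathcal{C}$; when it is $<\tfrac{\pi}{2}$ the sheets separate and one gets the disjoint union $(\sqcup_{i=1}^8\mathcal{H}_i)\sqcup\mathcal{C}$; the transition at $k=\tfrac{3}{3-\beta}$ is the Cayley-type cube-with-eight-vertex-cones. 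The cases $k=0$ (the inner component collapses to $\{0\}$) and $\tfrac{3}{3-\beta}<0<k$ (inner component absent) then fall out of the root count.

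The main obstacle I anticipate is not the quadric classification itself — that is purely linear algebra already done — but the \emph{topological reassembly}: proving that when the hyperboloid sheet meets a reflection plane transversally, the inverse images under $x=\pm\sqrt X$ glue across that plane into a smooth connected-sum handle rather than two tangent or disjoint pieces, and correctly identifying whether the resulting object is $\mathcal{H}_i$ versus $\mathcal{C}$-attached. This requires a careful local analysis near the coordinate planes $\{X=0\}$ etc., where the change of variables is singular (the metric $ds^2$ blows up), so one must check by hand that the quartic is smooth there when the quadric meets $\{X=0\}$ transversally and acquires a conical singularity precisely when it is tangent — e.g.\ the eight cone points appearing at $k=\tfrac{3}{3-\beta}$ arise from the quadric's singular (cone) vertex sitting on the diagonal. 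Since the paper explicitly declines to spell out these intersection calculations, I would state the reconstruction principle once (referencing the proofs of the earlier theorems) and then for each subcase simply record the root count and the angle comparison, leaving the routine verifications to the reader. \qed
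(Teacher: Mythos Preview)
Your approach is essentially the paper's own: pass to the quadric $F_{--}$, read off its type from $(\det\Lambda^{--},\det\Lambda^{-}_0,\sigma^\pm_0)$, count positive roots of the diagonal equation $3(\beta-3)X^2-3X-\tfrac{k}{4}=0$, and use the opening angle of the vertical hyperbola sections against $\tfrac{\pi}{2}$ to decide whether the sheets cross the reflection planes (connected sum) or not (disjoint union); the paper runs exactly this argument, and, like you, declines to write out the routine intersection checks.

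One small correction at $\beta=3$: since $\det\Lambda^{--}=-\tfrac{\beta^2}{16}\bigl(k(\beta-3)+3\bigr)=-\tfrac{27}{16}\ne 0$ there, $\operatorname{rk}\Lambda^{--}=4$, and with $J>0$ the degenerate quadric is an \emph{elliptic paraboloid} with vertex on the diagonal (the paper's proof computes this vertex at $\bigl(\tfrac{-k}{12},\tfrac{-k}{12},\tfrac{-k}{12}\bigr)$), not a parabolic cylinder as you wrote. This does not affect the conclusion---an unbounded paraboloidal sheet in the fundamental cone reconstructs to the eight-branched star just the same---but your identification of the quadric type is off. (The paper's own text calls it an ``elliptic cylinder'', which is also inaccurate; the vertex it then writes down shows it really means paraboloid.)
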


\begin{proof}
The proof is based on the results given previously.
If $k<\frac{3}{3-\beta},  \det \Lambda_{0}\neq 0$ and $0<\beta<3$, then the quadric is an ellipsoid, which intersects the fundamental domain and has symmetry axis $\{X=Y=Z\}$. Therefore the quartic is a cuboid.   

If $\beta=3$ and $k<0$, the quadric surface is degenerated, classified as an elliptic cylinder.
It's vertex has coordinate $(\frac{-k}{12},\frac{-k}{12},\frac{-k}{12})$. Thus, as $k<0$ the surface is a star with eight branches.
If $3<\beta<4, k<\frac{3}{3-\beta}$, the quadric is a one-sheeted hyperboloid. The result comes straight-forward.
If $3<\beta<4, \frac{3}{3-\beta}=k$ the surface has eight singularities. In the fundamental domain, this corresponds to a degenerated quadric, which is a cone with vertex lying on the axis $\{X=Y=Z\}$.
If $\frac{3}{3-\beta}<k<0$, the quadric is a two sheeted-hyperboloid with both sheets lying in the fundamental domain. Moreover, the asymptotic lines to the vertical section of the hyperboloid  form an angle smaller than $\frac{\pi}{2}$. Hence, the resulting surface is a compact component surrounded by eight convex sheets. 
If $0<\frac{3}{3-\beta}<k$, the surface is a two sheeted-hyperboloid with only one sheet in the fundamental domain. Therefore, the resulting surface is constituted of eight convex sheets.

\end{proof}


\subsubsection{Some properties of quartic surface $Q^{-+}(k)$}

\begin{lemma}\label{L:f-+}
Let the octahedral quartic be of type $f_{-+}$. 
\begin{itemize}
\item If  $0<k<1$ and $0<\beta<3$ then there exists a quartic surface $ f_{-+}$which is a disjoint union of two concentric compact surfaces, each homeomorphic to a sphere.

\item If $ \beta<3$ and $\frac{4}{4-\beta}<k<\frac{3}{3-\beta}$ then there exists a quartic surface $ f_{-+}$ which is a disjoint union of eight compact surfaces disposed symmetrically about zero at the vertices of a cube.

\item If $k>\frac{3}{3-\beta}$ and $\beta>3$, the quartic surface $ f_{-+}$ has at most eight disjoint convex sheets, each lying symmetrically about zero in their own octant, and with their vertex disposed at the vertices of a cube.

\item If $\beta=3$ then the quartic surface is a union of eight cylinders disposed on the vertices of a cube.
\end{itemize}
\end{lemma}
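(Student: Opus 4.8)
The plan is to run, for the polynomial $f_{-+}$, exactly the reduction already used for Theorems~\ref{T:10CD}--\ref{T:1B0D} and Lemmas~\ref{L:f++}--\ref{L:f--}. Setting $X=x^{2}$, $Y=y^{2}$, $Z=z^{2}$ replaces $\{f_{-+}=0\}$ by the quadric $F_{-+}(X,Y,Z)=\mathbf X^{T}\Lambda^{-+}\mathbf X=0$ in the closed first octant; one classifies it from $\det\Lambda^{-+}=-\frac{\beta^{2}}{16}(k(\beta-3)+3)$, $\det\Lambda^{-}_{0}=\frac{\beta^{2}}{4}(\beta-3)$ and the eigenvalues $\{-\frac{\beta}{2},-\frac{\beta}{2},\beta-3\}$, and then reconstructs the quartic by the inverse map $x=\pm\sqrt{X}$ together with the order-$8$ subgroup of $O_{h}$ generated by the three coordinate reflections. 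As before, the position of the quadric relative to the fundamental domain is controlled by: the number of positive roots of the diagonal section $3(\beta-3)t^{2}+3t-\frac{k}{4}=0$; for hyperboloids, whether a vertical section is a hyperbola whose asymptotes meet at an angle larger or smaller than $\frac{\pi}{2}$ (this decides whether a sheet crosses the reflection planes $X=Y$, $Y=Z$, $Z=X$); and the sections of the quartic itself along the coordinate axes, the face diagonals and the main diagonals, namely $x^{2}=\frac{1\pm\sqrt{1-k}}{2}$ and, writing $s=t^{2}$, $s=\frac{2\pm\sqrt{4-(4-\beta)k}}{2(4-\beta)}$ and $s=\frac{3\pm\sqrt{9-3(3-\beta)k}}{6(3-\beta)}$.

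\textbf{Case $0<\beta<3$.} Then $\Lambda^{-}_{0}$ is negative definite, so $F_{-+}=0$ is an ellipsoid of revolution about the diagonal (nonempty exactly for $k<\frac{3}{3-\beta}$), with centre on the diagonal in the open first octant and with the origin outside it since $F_{-+}(0)=-\frac{k}{4}<0$ when $k>0$. For $0<k<1$ all three discriminants above are positive (note $1<\frac{4}{4-\beta}<\frac{3}{3-\beta}$), so the ellipsoid crosses each coordinate plane $X=0,Y=0,Z=0$ in a genuine ellipse (the quadratic part of $F_{-+}(X,Y,0)$ has determinant $\frac{\beta(4-\beta)}{4}>0$, hence is negative definite), its intersection with the first octant is a patch bounded by three ellipse-arcs, and gluing the eight reconstructed copies along the coordinate planes yields a closed surface; since the surface meets each distinguished ray in two positive radii, one checks as in Lemma~\ref{L:f--} that it splits into a star-shaped inner and a star-shaped outer sheet, i.e.\ two nested topological spheres. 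For $\frac{4}{4-\beta}<k<\frac{3}{3-\beta}$ the axis and face-diagonal discriminants turn negative while the main-diagonal one stays positive; equivalently the ellipsoid has shrunk strictly inside the open first octant (it touches the faces only at the threshold $k=\frac{4}{4-\beta}$, which is where the twelve conical singularities at the edge-midpoints of the cuboid appear), so the reconstruction gives eight disjoint small spheres, one per octant, centred on the eight directions $(\pm1,\pm1,\pm1)$, the vertices of a cube.

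\textbf{Case $\beta>3$, $k>\frac{3}{3-\beta}$.} Now $\beta-3>0$, the signature of $\Lambda^{-}_{0}$ is $(1,2)$ and $\det\Lambda^{-+}<0$, so, as in the proof of Theorem~\ref{T:10CD}, $F_{-+}=0$ is a two-sheeted hyperboloid with axis the diagonal and centre on the diagonal in the open \emph{negative} octant. The sheet opening toward the negative diagonal misses the first octant; the sheet opening toward the positive diagonal, whenever it meets the octant, does so in the intersection of its convex interior with the convex cone, hence in a convex patch, and when that patch does not reach a coordinate plane the eight reconstructed copies are disjoint, giving at most eight disjoint convex sheets, one per octant, with vertex pointing at a cube vertex $(\pm1,\pm1,\pm1)$; ``at most'' covers the sub-ranges of $k$ in which the patch is empty or does reach the reflection planes (any further, bounded, components then being the ones listed in Theorem~\ref{T:1-bCD}). \textbf{Case $\beta=3$.} Here $\det\Lambda^{-}_{0}=0$ while $\det\Lambda^{-+}\neq0$, so the quadric degenerates to an elliptic paraboloid with apex on the diagonal at $X=Y=Z=\frac{k}{12}$; indeed $F_{-+}=-\frac12\big((X-Y)^{2}+(Y-Z)^{2}+(Z-X)^{2}\big)+(X+Y+Z)-\frac{k}{4}$, so its sections transverse to the diagonal are bounded circles while it is unbounded along the positive diagonal, and reconstruction yields eight unbounded cylindrical sheets around the directions $(\pm1,\pm1,\pm1)$, i.e.\ seated on the vertices of a cube.

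The routine parts are the quadric classification and the bookkeeping of the distinguished sections above; the genuine difficulty, and the step I expect to be the obstacle, is the precise description of how each quadric meets the coordinate planes $X=0,Y=0,Z=0$ and the diagonal planes $X=Y,Y=Z,Z=X$ inside the first octant, since that is exactly what decides whether the eight reconstructed patches stay disjoint or fuse and where the conical singularities of the borderline surfaces lie. As in the proof of Lemma~\ref{L:f--} I would carry this out only far enough to certify the connectivity asserted (two components; eight components; at most eight sheets; eight cylinders), leaving the full intersection computation, which is heavy and, as the paper itself remarks, not illuminating, implicit.
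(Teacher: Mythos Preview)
Your proposal follows essentially the same route as the paper: pass to the quadric $F_{-+}$, classify it via $\det\Lambda^{-+}$ and the signature of $\Lambda^{-}_{0}$ (ellipsoid for $0<\beta<3$, two-sheeted hyperboloid for $\beta>3$ with $k>\frac{3}{3-\beta}$, degenerate for $\beta=3$), locate its position relative to the first octant by counting positive roots along $X=Y=Z$ and testing intersections with the coordinate and diagonal planes, then reconstruct by the coordinate reflections. Your explicit discriminant bookkeeping for the axis, face-diagonal and main-diagonal sections is somewhat more detailed than what the paper writes out, and your identification of the $\beta=3$ quadric as an elliptic \emph{paraboloid} (rank $4$, since $\det\Lambda^{-+}=-\tfrac{27}{16}\neq0$) is in fact sharper than the paper's own proof, which calls it an elliptic cylinder of rank $3$; either way the resulting quartic picture of eight unbounded sheets along the cube diagonals is the same.
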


\begin{proof}
The proof is based on the results given previously and uses the same method as before.
First let us consider how many connected components lie in the positive octant.
The intersection points of $F_{-+}$ with the line are the roots of the degree two polynomial:
$X^2(3\beta-9)+3X-\frac{k}{4}=0.$

In order to have both positive roots, one must have:

$0<k<\frac{3}{3-\beta}$, for $0<\beta<3$. In this case the quadric is an ellipsoid  with positive center: $(-\frac{\sqrt{3}}{2(\beta-3)},\ -\frac{\sqrt{3}}{2(\beta-3)},\ -\frac{\sqrt{3}}{2(\beta-3)})$. 
One can find that the ellipsoid intersects each of the reflection planes $\{X=0\},\{X=0\}, \{Z=0\}$. Therefore, in the fundamental domain there exist two disjoint sheets transversal to the sides of the fundamental domain. By symmetry, the quartic is a disjoint union of compact concentric components, each homeomorphic to a sphere.
\hspace{3pt}

If $k>\frac{3}{3-\beta}, \beta>3$ and $\frac{4}{4-\beta}<k<\frac{3}{3-\beta}$ the family of ellipsoids is strictly contained in the interior of the positive octant. Since the ellipsoid lies on the line $\{X=Y=Z\}$,
the symmetry actions, map this connected component to the seven other octants giving a quartic, constituted of eight connected components.
\hspace{3pt}

The third statement requires that $k>\frac{3}{3-\beta}$ and $\beta>3$. 
This corresponds to a two-sheeted hyperboloid.
Moreover, note that the vertical section of this quadric is a hyperbola with asymptotes that have an angle greater than $\frac{\pi}{2}$. So, the intersection of the hyperboloid with the reflection planes $\{X=0\}, \{Y=0\}, \{Z=0\}$ is empty. Moreover the intersection with the line  $\{X=Y=Z\}$ gives a positive and a negative root. Therefore, the resulting surface is composed of eight convex sheets.
\hspace{3pt}

If $\beta=3$, $det(\Lambda) \neq 0$ and $rk(\Lambda)$=3.
By Bromwitch-Buringhton's classification theorem, the quadric is an elliptic cylinder.
The orientation of the cylinder follows from the eigenvectors and eigenvalues.
By symmetry, the cylinder is mapped to the seven octants.
Thus the quartic is a union of cylinders.
\end{proof}

\begin{corollary}
A degree four octahedral surface has a connected component centered at the origin and homeomorphic to a sphere if we have either cases (1) or (2) and if  (3) is verified:
\begin{enumerate}
\item The quadric equation corresponds to a two sheeted hyperboloid with both sheets in the fundamental polyhedron.
\item The quadric equation in the fundamental polyhedron corresponds to an ellipsoid, with the condition that it's intersection with the planes $\{X=0\},\{Y=0\}$ and $\{Z=0\}$ is an ellipse and that the center is not zero.
\item The intersection of the line \{X=Y=Z\} with the quadric has two positive roots.
\end{enumerate}
\end{corollary}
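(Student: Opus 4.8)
The plan is to reduce the corollary to the reconstruction procedure already used in the proofs of Theorems~\ref{T:1bCD} and \ref{T:1-bCD}, and then, in each of the two admissible cases, to single out the piece of the quadric that unfolds into a sphere around the origin.

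I would first recall the reconstruction map. The quartic $\{f=0\}$ is the pull-back, under the branched covering
\[
\pi\colon\mathbb{R}^{3}\longrightarrow\{X\ge0,\,Y\ge0,\,Z\ge0\},\qquad \pi(x,y,z)=(x^{2},y^{2},z^{2}),
\]
of the associated quadric $\{F=0\}$ restricted to the first octant; here $\pi$ is a trivial $8$-sheeted covering over the open octant, it folds with ramification index $2$ across each coordinate plane $\{X=0\},\{Y=0\},\{Z=0\}$, and $F$ is invariant under all permutations of $X,Y,Z$. With this in hand, the statement follows once we produce a connected surface patch $\Sigma\subset\{F=0\}$ lying in the first octant with the two properties: (i) $\Sigma$ separates, inside the octant, the corner at the origin from infinity; and (ii) $\Sigma$ meets each coordinate plane $\{X=0\},\{Y=0\},\{Z=0\}$ in a single arc, so that $\partial\Sigma$ is a ``triangle'' with its three vertices on the coordinate axes. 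Indeed $\pi^{-1}(\Sigma)$ is then assembled from $\Sigma$ by successive reflections across the three coordinate hyperplanes, exactly as $\{x^{2}+y^{2}+z^{2}=1\}$ is assembled from its first-octant part $\{X+Y+Z=1,\,X,Y,Z\ge0\}$; hence $\pi^{-1}(\Sigma)$ is a single connected $O_{h}$-invariant surface homeomorphic to $S^{2}$, and by (i) the bounded component of its complement is $O_{h}$-invariant and therefore contains the fixed point of $O_{h}$, that is, the origin. The remainder of $\{f=0\}$ accounts for the other components appearing in the theorems.

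It then remains to exhibit such a $\Sigma$ under the hypotheses ``(1) and (3)'' or ``(2) and (3)''. By (3) the diagonal $\{X=Y=Z\}$ meets $\{F=0\}$ at two points of positive parameter $X_{1}<X_{2}$; since the centre of the quadric is the midpoint of its two vertices on that axis, the centre has a positive parameter, in particular it is not the origin, so the last clause of hypothesis~(2) is automatic once (3) holds. In case (2) the quadric is a convex ellipsoid of revolution about the diagonal, not containing the origin, and meeting each coordinate plane in an ellipse; slicing it by the three coordinate planes leaves a triangular cap $\Sigma$ in the octant bounded by one arc of each of the three section ellipses, and, the ellipsoid being convex with the origin in its exterior, $\Sigma$ has property (i). This is precisely the situation of the first item of Lemma~\ref{L:f-+}. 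In case (1) the quadric is a two-sheeted hyperboloid of revolution about the diagonal with both sheets meeting the octant, and I would take $\Sigma$ to be the sheet with vertex $(X_{1},X_{1},X_{1})$: this is a convex bowl opening toward the origin whose vertex is interior to the octant, and along the axis parameter $t\le X_{1}$ its cross-section is an ellipse centred at $(t,t,t)$ whose size stays bounded below by a positive constant as $t\to0^{+}$ (the section of the asymptotic cone). Since the plane $\{X+Y+Z=3t\}$ meets the closed octant in a triangle that shrinks to a point as $t\to0^{+}$, the bowl is forced out of the octant through the coordinate planes; by the symmetry in $X,Y,Z$ it leaves through all three, each time along a single conic arc, so $\Sigma$ restricted to the octant is again a triangular cap with properties (i) and (ii). This is the configuration of Lemma~\ref{L:f+-2} (and of the compact inner component in Lemma~\ref{L:f--}). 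In either case $\pi^{-1}(\Sigma)$ is the asserted component.

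The main point requiring care is the verification, in case (1), that the inner sheet really reaches \emph{all three} coordinate planes: a priori a thin hyperboloidal bowl running along the diagonal could stay inside the octant. What saves us is that no hypothesis on the opening angle of the asymptotic cone is needed here; unlike for the \emph{outer} sheet, where such an angle condition decides whether the remaining components are compact (compare the angle computations in the proofs of Lemmas~\ref{L:f++} and \ref{L:f--}), reaching the three coordinate planes is forced purely by the interior position of the vertex together with the collapse of the octant's cross-section near the corner. Granting this, both cases reduce to the reflection bookkeeping of the second paragraph.
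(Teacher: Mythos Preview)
The paper's own proof is the single sentence ``the proof follows from the above discussion,'' deferring entirely to Lemmas~\ref{L:f+-2}, \ref{L:f--}, and~\ref{L:f-+}; your argument is precisely the explicit unpacking of the reconstruction mechanism those lemmas use, via the branched cover $\pi(x,y,z)=(x^{2},y^{2},z^{2})$ and the ``triangular cap'' criterion on a patch $\Sigma$, so you are on the same route, only far more detailed and transparent. One minor slip in case~(1): the parenthetical ``(the section of the asymptotic cone)'' points the wrong way, since a sheet of a two-sheeted hyperboloid lies \emph{inside} its asymptotic cone, so the cone's section is an upper rather than a lower bound; what you actually need, and correctly state just before the parenthetical, is only that the inner sheet's perpendicular cross-section at $t=0$ has a definite positive size (the vertex being at $t=X_{1}>0$) while the octant's cross-section in the plane $X+Y+Z=3t$ collapses to a point as $t\to0^{+}$, and that already forces the bowl out through all three coordinate faces regardless of the aperture of the cone.
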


\begin{proof}the proof follows from the above discussion.\end{proof}

\appendix
\section{Bromwich-Buringthon classification}\label{A:BrBu}

The Bromwich-Buringthon method~\cite{Bro1905,Bur1932,MeYa} of quadrics' classification consists to write the quadratic polynomial 
\[P(x,y,z) =ax^{2}+by^{2}+cz^{2} +2fxy+2gyz+2hzx+2px+2qy+2rz+d.\]
 as a matrix product:
\begin{equation}
P(x,y,z)= \mathbf{X}^{T}\Lambda \mathbf{X}, \quad \Lambda=\begin{pmatrix}d& p&q&r\\p&a&f&h\\q&f&b&g\\r&h&g&c\\\end{pmatrix}, \quad \mathbf{X}=(1,x,y,z)^{T}.
\end{equation}
The method depends on whether we consider real or complex field, and whether we consider the affine or projective space. In this paper we are interested only in the real affine case and moreover by quadric polynomial of the type:
\begin{equation}
F(X,Y,Z)= A(XY+YZ+ZX) +B(X+Y+Z)^{2}+C(X+Y+Z)+D.
\end{equation}
The quadric surface $F(X,Y,Z)=0$ can be  written  as the following matricial equation : 
 
 \begin{equation}\label{E:quartiqueinv}
 \mathbf{ X}^{T}\Lambda\ \mathbf{ X}=0,\quad  \Lambda=\left(\renewcommand{\arraystretch}{1.5}\begin{tabular}{c|c}
$D$ &$ \frac{1}{2}\mathbf{C}^{T}$\\ \hline
$\frac{1}{2}\mathbf{C}$ & $\Lambda_{0}$  
\end{tabular}\right),
\quad \mathbf{ X}=(1,X,Y,Z)^{T},
 \end{equation}
 with
 \[ \Lambda_{0}= \begin{pmatrix}
B & W  & W\\ 
W & B & W\\
W & W & B  \\  
\end{pmatrix},\quad W=\frac{A+2B}{2}, \quad\mathbf{C}=(C,C,C)^{T} .
\]

The geometric classification of the quadric  is given in terms of  matricial  invariants such that: \begin{itemize}
 \item $\det \Lambda$  ($\det \Lambda_{0}$), determinant of $\Lambda$ (resp. $\Lambda_{0}$),
 \item $\text{tr} \Lambda$  ($\text{tr} \Lambda_{0}$), trace of $\Lambda$ (resp. $\Lambda_{0}$),
\item $\sigma^{-}, \sigma^{+}$ ($\sigma^{-}_{0}, \sigma_{0}^{+})$), number of negative and of positive  eigenvalues of $\Lambda$ (resp.  $\Lambda_{0}$),  
\item $\text{rk}(\Lambda)$ ($\text{rk}(\Lambda_{0})$), rank of $\Lambda$ (resp. $\Lambda_{0}$),
\item $J $ sum of $\Lambda_{0}$'s minors.
\end{itemize}

In the case of the quadric~\eqref{E:quartiqueinv} the classification can be made with the matrix invariants
\[\det \Lambda,\ \det \Lambda_{0},\ (\sigma^{-}, \sigma^{+}), \text{rk}(\Lambda),\ J=9(B^{2}-W^{2}).\]
We briefly recall the classification theorem due to Burighton.
\begin{theorem}[Bromwich-Buringhton's classification of quadrics in terms of matrix invariants]

\ 
\vspace{5pt}
\begin{enumerate}

\item If $\det \Lambda_{0}=0 $, then we have the following cases :
\begin{itemize}
\item $\text{rk}(\Lambda)=4$, 
\begin{itemize}
\item $J>0$ elliptic paraboloid
\item $J< 0$ Hyperbolic paraboloid
\end{itemize}
\hspace{5pt}

\item $\text{rk}(\Lambda)=3$
\begin{itemize}
\item $J>0$ Real elliptic cylinder
\item $J< 0$ Hyperbolic cylinder
\item $J=0$  Parabolic cylinder
\end{itemize}
\hspace{5pt}

 \item$\text{rk}(\Lambda)=2$
\begin{itemize}
\item $J>0$ Pair of imaginary lines
\item $J< 0$ Pair of secant planes
\item $J=0$  Pair of parallel plans imaginary or real.
\end{itemize}
\hspace{5pt}

\item $\text{rk}(\Lambda)=1$
\begin{itemize}
\item double plane

\end{itemize}
\end{itemize}
\hspace{5pt}

\item If $\det \Lambda_{0}\ne0 $, then we have the following cases :
\hspace{5pt}

\begin{itemize}
\item $\text{rk}(\Lambda)$=4
\begin{itemize}
\item $(\sigma^{-}_{0}=3,\sigma^{+}_{0}=0)$
\begin{itemize}
\item $\det(\Lambda)>0$ imaginary ellipsoid
\item $\det(\Lambda)<0$ real ellipsoid
\end{itemize}
\item $(\sigma^{-}_{0}=2,\sigma^{+}_{0}=1)$
\begin{itemize}
\item $\det(\Lambda)>0$ one sheeted hyperboloid
\item $\det(\Lambda)<0$ two sheeted hyperboloid
\end{itemize}
\end{itemize}

\item $\text{rk}(\Lambda)=3$
\begin{itemize}
\item $(\sigma^{-}_{0}=3,\sigma^{+}_{0}=0)$ imaginary elliptic cone
\item $(\sigma^{-}_{0}=2,\sigma^{+}_{0}=1)$ real elliptic cone
\end{itemize}
\end{itemize}

\item $\det \Lambda=0$ 
\begin{itemize}
\item rk$(\Lambda)=3$ , the quadric has an isolated singular point
\begin{itemize}
\item If the point is proper then the quadric is a cone with as vertex this point.
\item If the point is improper the quadric is a cylinder.
\end{itemize}
\item if rk$(\Lambda)=2$, the quadric is the intersection of two planes; the singular locus is the intersection set.
\item if rk$(\Lambda)=1 $ the quadric is a plane of multiplicity two; the singular locus is the plane.
\begin{remark}
If  the singularity of a quadric gives a singularity of the corresponding quartic, the converse is not true.
\end{remark}
\end{itemize}
\end{enumerate}

\end{theorem}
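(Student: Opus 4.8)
The plan is to prove the classification by reducing an arbitrary real quadratic polynomial $P(x,y,z)=\mathbf{X}^{T}\Lambda\mathbf{X}$ to an affine canonical form and then reading off, from that normal form, the values of the matrix invariants in the statement; conversely, one shows that each admissible tuple of invariants forces a single canonical form. The backbone is that the augmented matrix transforms by congruence under the affine group: writing an affine map as $\mathbf{x}\mapsto M\mathbf{x}+\mathbf{t}$ and extending it to $\tilde M=\left(\begin{smallmatrix}1&\mathbf{0}^{T}\\ \mathbf{t}& M\end{smallmatrix}\right)$, one has $\Lambda\mapsto \tilde M^{T}\Lambda\tilde M$ and $\Lambda_{0}\mapsto M^{T}\Lambda_{0}M$. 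Hence $\operatorname{rk}(\Lambda)$ and $\operatorname{rk}(\Lambda_{0})$ are preserved, the pairs $(\sigma^{-},\sigma^{+})$ and $(\sigma^{-}_{0},\sigma^{+}_{0})$ are preserved by Sylvester's law of inertia, and both $\det\Lambda$ and $\det\Lambda_{0}$ are multiplied by $(\det M)^{2}>0$, so their signs and vanishing are invariant. The quantity $J$, the sum of the principal $2\times 2$ minors of $\Lambda_{0}$, is the second elementary symmetric function of the eigenvalues of $\Lambda_{0}$; when $\det\Lambda_{0}=0$ one eigenvalue vanishes and $J$ reduces to the product of the two survivors, so its sign records whether they agree or differ and is again preserved by congruence.

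First I would diagonalize the quadratic part: by the spectral theorem choose orthogonal $M$ with $M^{T}\Lambda_{0}M=\operatorname{diag}(\lambda_{1},\lambda_{2},\lambda_{3})$, carrying exactly $\sigma^{+}_{0}$ positive and $\sigma^{-}_{0}$ negative entries. For each nonzero $\lambda_{i}$ I complete the square, absorbing its linear term by a translation; this alters only the augmented constant. Two regimes arise according to whether $\det\Lambda_{0}=\lambda_{1}\lambda_{2}\lambda_{3}$ vanishes.

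If $\det\Lambda_{0}\ne 0$ all three squares are completed and the normal form is $\lambda_{1}X^{2}+\lambda_{2}Y^{2}+\lambda_{3}Z^{2}+\delta=0$, with $\det\Lambda=\lambda_{1}\lambda_{2}\lambda_{3}\,\delta$, so $\operatorname{rk}(\Lambda)=4$ iff $\delta\ne0$. The signature $(\sigma^{-}_{0},\sigma^{+}_{0})$ separates ellipsoids $(3,0)$ from one- and two-sheeted hyperboloids $(2,1)$, while the sign of $\det\Lambda$ (equivalently the sign of $\delta$ relative to the eigenvalues) fixes the real-versus-imaginary and one-versus-two-sheeted distinctions; the case $\delta=0$, i.e.\ $\operatorname{rk}(\Lambda)=3$, gives the real or imaginary cone, again discriminated by the signature. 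If instead $\det\Lambda_{0}=0$, say $\lambda_{3}=0$, then $Z$ survives only through a residual linear term or a constant. When a linear term in $Z$ remains, a final shift produces a paraboloid whose elliptic or hyperbolic nature is decided by $J=\lambda_{1}\lambda_{2}$, recovering the $\operatorname{rk}(\Lambda)=4$ subcase. When no linear term remains the polynomial is independent of $Z$ and the quadric is a cylinder or a degenerate pair of planes over the conic $\lambda_{1}X^{2}+\lambda_{2}Y^{2}+\delta=0$; here $\operatorname{rk}(\Lambda)\in\{1,2,3\}$ measures the degeneracy and the sign of $J$ distinguishes the elliptic, hyperbolic and parabolic shapes, matching every line of the list.

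The main obstacle I anticipate is not any single reduction but the exhaustive, consistent matching of every invariant-tuple against the finite list of canonical forms, with special care in the degenerate stratum $\det\Lambda_{0}=0$. There the interplay of $\operatorname{rk}(\Lambda)$ with the sign of $J$ must be checked to be simultaneously invariant and complete; in particular one must verify that $J=0$ genuinely characterizes the parabolic (rank-deficient residual conic) case and that the rank drops predicted by completing the square agree with $\operatorname{rk}(\Lambda)$ computed directly, so that no canonical form receives two invariant signatures and none is omitted. Once these bookkeeping identities are secured, the classification follows by simply enumerating the normal forms, exactly as tabulated.
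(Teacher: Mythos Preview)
The paper does not prove this theorem at all: it is stated in the appendix as a classical result and simply referenced to Bromwich~\cite{Bro1905}, Burington~\cite{Bur1932}, and Mehlhorn--Yap~\cite{MeYa}. Your proposal is the standard proof---reduction to affine normal form via congruence of the augmented matrix, Sylvester's law of inertia for the signatures, and case-by-case matching of the invariants---and it is essentially correct as a plan; this is exactly the argument one finds in the cited sources.

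One small point worth tightening: you assert that the sign of $J$ is an affine invariant. Under a general congruence $\Lambda_{0}\mapsto M^{T}\Lambda_{0}M$ the individual eigenvalues are not preserved, so $J=\lambda_{1}\lambda_{2}+\lambda_{2}\lambda_{3}+\lambda_{3}\lambda_{1}$ itself is not invariant. Your observation that, in the relevant stratum $\det\Lambda_{0}=0$, one eigenvalue vanishes and $J$ reduces to the product of the remaining two---whose sign \emph{is} determined by the inertia---is exactly the right fix, but you should state explicitly that the sign of $J$ is being used only on that stratum, not globally.
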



\end{document}